\definecolor{mycolor1}{rgb}{0.00000,0.44700,0.74100}
\definecolor{mycolor2}{rgb}{0.8500, 0.3250, 0.0980}
\definecolor{mycolor3}{rgb}{0.9290, 0.6940, 0.1250}
\definecolor{mycolor4}{rgb}{0.4940, 0.1840, 0.5560}
\theoremstyle{plain}
\newtheorem{theorem}{Theorem}[section]
\newtheorem{lemma}{Lemma}[section]
\newtheorem{proposition}{Proposition}[section]
\theoremstyle{definition}
\newtheorem{definition}{Definition}[section]
\newtheorem{remark}{Remark}
\newcommand{\NBE}{\textup{NBE}}
\newcommand{\EBE}{\textup{EBE}}
\newcommand{\EMBE}{\textup{EMBE}}
\newcommand{\TBE}{\textup{TBE}}
\newcommand{\C}{\mathbb{C}}
\newcommand{\R}{\mathbb{R}}
\pgfplotsset{
  log x ticks with fixed point/.style={
      xticklabel={
        \pgfkeys{/pgf/fpu=true}
        \pgfmathparse{exp(\tick)}%
        \pgfmathprintnumber[fixed relative, precision=3]{\pgfmathresult}
        \pgfkeys{/pgf/fpu=false}
      }
  }}
\newcolumntype{R}[2]{%
    >{\adjustbox{angle=#1,lap=\width-(#2)}\bgroup}%
    l%
    <{\egroup}%
}
\newcommand{\xdashrightarrow}[2][]{\ext@arrow 0359\rightarrowfill@@{#1}{#2}}
\newcommand{\Binom}[2]{\begin{pmatrix}
{#1} \\ {#2}
\end{pmatrix}}
\newcommand{\norm}[1]{\lVert {#1} \rVert}
\newcommand{\trop}[1]{\texttt{t}{#1}}
\definecolor{sascha_color}{RGB}{0,139,139}
\begin{document}
\title{Backward Error Measures for Roots of Polynomials}
\author[1]{Simon Telen}
\author[2]{Sascha Timme\thanks{This author was supported by the Deutsche Forschungsgemeinschaft (German Research Foundation) Graduiertenkolleg {\em Facets of Complexity} (GRK~2434).}}
\author[1]{Marc Van Barel\thanks{This author was partially supported by the Research Council KU Leuven, C1-project (Numerical Linear Algebra and Polynomial Computations), and by the Fund for Scientific Research Flanders (Belgium), G.0828.14N (Multivariate polynomial and rational interpolation and approximation), and EOS Project no 30468160.}}
\affil[1]{\small{Department of Computer Science, KU Leuven}}
\affil[2]{\small{Technische Universit\"at Berlin, Chair of Discrete Mathematics/Geometry}}
\affil[ ]{\small{\texttt{simon.telen@kuleuven.be, timme@math.tu-berlin.de, marc.vanbarel@kuleuven.be}}}
\date{\today}

\maketitle

\begin{abstract}
We analyze different measures for the backward error of a set of numerical approximations for the roots of a polynomial. We focus mainly on the element-wise mixed backward error introduced by Mastronardi and Van Dooren, and the tropical backward error introduced by Tisseur and Van Barel. We show that these measures are equivalent under suitable assumptions. We also show relations between these measures and the classical element-wise and norm-wise backward error measures.
\end{abstract}

\section{Introduction}
In this article we analyze the problem of measuring the backward error for a set of approximations for the roots of a polynomial with complex coefficients. For a general introduction to the notion of backward error analysis, the reader can consult for instance \cite[Section 1.5]{Higham:2002}. Consider a set of approximate solutions $\hat{X} = \{ \hat{x}_1, \ldots, \hat{x}_d \} \subset \C^* = \C \setminus \{0\}$ of a polynomial equation with nonzero coefficients
$$f = c_0 + c_1x + \ldots +c_{d-1} x^{d-1} + c_d x^d = c_d(x-x_1) \cdots (x-x_d) = 0, \quad f \in \C[x],$$
with solutions $X = \{x_1, \ldots, x_d\} \subset \C^*$. The \emph{backward error} of $\hat{X}$ is a measure for the `distance' of $f$ to the polynomial $$\hat{f} = \hat{c}_0 + \hat{c}_1x + \ldots + \hat{c}_{d-1} x^{d-1} + c_d x^d = c_d(x- \hat{x}_1) \cdots (x - \hat{x}_d),$$ whose roots are exactly the points in $\hat{X}$. How to measure this distance turns out to be a surprisingly subtle problem. A first and natural measure is the 2-norm distance between the coefficients of $f$ and $\hat{f}$. The \emph{norm-wise backward error} (NBE) of $\hat{X}$ is 
$$ \NBE(\hat{X}) = \sqrt{\frac{|c_0-\hat{c}_0|^2 + \cdots + |c_{d-1}-\hat{c}_{d-1}|^2}{|c_0|^2 + \cdots + |c_{d}|^2}} = \frac{\norm{c - \hat{c}}_2}{\norm{c}_2},$$
where $c = (c_0, \ldots, c_{d}) \in \C^d, \hat{c} = (\hat{c}_0, \ldots, c_{d}) \in \C^d$. In \cite{aurentz2015fast}, an algorithm is proposed that computes a set of approximate solutions $\hat{X}$ satisfying $\NBE(\hat{X}) = O(u)$, where $u$ is the unit round-off. Such an algorithm is called \emph{norm-wise backward stable}. However, it turns out that this type of stability is too `weak' in a sense we explain by means of an example. Consider the polynomial $f = a(x-10^6)(x-10^{-6})$. The set of approximate solutions $\hat{X} = \{10^6 + u 10^6, 10^{-6} + u \}$ would satisfy $\NBE(\hat{X}) = O(u)$. Indeed, 
$$ \hat{c} = a(1 + (u 10^6 + u + u^2 10^6), -(10^6 + 10^{-6}) - u (10^6+1), 1), \quad c = a(1,-(10^6+10^{-6}),1)$$ 
and $\norm{c-\hat{c}}_2/\norm{c}_2 = O(u)$. This means that we would allow a relative error of size $u 10^6$ on the coefficient vector $c$. However, computing the roots of $f$ with $a = 0.2$ using the Julia package \texttt{PolynomialRoots} (using the command \texttt{roots}) we get
\begin{verbatim}
julia> abs.((c - chat)./c)
3-element Array{Float64,1}:
 2.7755575615628914e-16
 0.0                   
 0.0      
\end{verbatim}
which shows that we can obtain better element-wise accuracy on the coefficient vector. This suggests another, more `strict', measure for the backward error. The \emph{element-wise backward error} (EBE) of $\hat{X}$ is 
$$ \EBE(\hat{X}) = \max_{i=0, \ldots, d-1} \left | \frac{c_i - \hat{c}_i}{c_i} \right |.$$
Unfortunately, this measure turns out to be \emph{too} strict. In \cite{mastronardi2015revisiting}, Mastronardi and Van Dooren show that \emph{no algorithm} for finding the roots of a general quadratic polynomial is element-wise backward stable, meaning that it computes $\hat{X}$ such that $\EBE(\hat{X}) = O(u)$. As an alternative measure, the authors of \cite{mastronardi2015revisiting} propose the following definition in the case where $d = 2$. The \emph{element-wise mixed backward error} (EMBE) of $\hat{X}$, denoted $\EMBE(\hat{X})$, is the smallest number $\varepsilon \geq 0$ such that there exists some $\tilde{X} = \{ \tilde{x}_1, \ldots, \tilde{x}_d \} \subset \C$ and 
$$\tilde{f} = \tilde{c}_0 + \tilde{c}_1x + \cdots + \tilde{c}_{d-1}x^{d-1} + c_d x^d = c_d(x-\tilde{x}_1)\cdots(x-\tilde{x}_d)$$
such that 
\begin{align*}
|\hat{x}_i - \tilde{x}_i| &\leq \varepsilon |\tilde{x}_i|, \quad i = 1, \ldots, d,\\
|c_i - \tilde{c}_i| &\leq \varepsilon |c_i|, \quad i = 0, \ldots, d-1.
\end{align*}
Note that the second set of inequalities is equivalent to $\EBE(\tilde{X}) \leq \varepsilon$. In the same paper, the authors also show the implication $\EMBE(\hat{X}) = O(u) \Rightarrow \NBE(\hat{X}) = O(u)$ in the case where $d=2$. The implication $\EBE(\hat{X}) = O(u) \Rightarrow \EMBE(\hat{X}) = O(u)$ is obvious from the definition.\\
The advantage of EMBE as a measure for the backward error is that it results in a notion of stability, i.e.\ \emph{element-wise mixed backward stability}, which is stronger than norm-wise backward stability and can provably be obtained for quadratic polynomials \cite{mastronardi2015revisiting}. A drawback of this measure is that it is \emph{hard to compute} $\EMBE(\hat{X})$ for a given set of approximate solutions $\hat{X}$ because of the rather abstract definition. In \cite{vanbarel2019tropical}, Tisseur and Van Barel define the \emph{min-max element-wise backward error} of $\hat{X}$ as 
$$ \TBE(\hat{X}) = \max_{i=0, \ldots, d-1} \left | \frac{c_i-\hat{c}_i}{r_i c_i} \right |$$
where $r_i \geq 1$ are constants that can be computed \emph{in linear time} from the coefficients of $f$. The $r_i$ depend only on the tropical roots of $f$, which is why we will refer to this error measure as the \emph{tropical backward error} (TBE). We will give a definition of the numbers $r_i$ in Section \ref{sec:generald}. The authors of \cite{vanbarel2019tropical} also provide an algorithm that, under some assumptions on the numerical behavior of a modified QZ-algorithm (see \cite[Section 5, Assumption 1]{vanbarel2019tropical}), computes a set of approximate roots $\hat{X}$ satisfying $\TBE(\hat{X}) = O(u)$.

In this paper, we investigate the relations between the TBE and the EMBE. In particular, we show that these error measures are equivalent under suitable assumptions. Here's a simplified version of our first main theorem. 
\begin{theorem} \label{thm:simplethm1}
Assume that the tropical roots of $f$ are of the same order of magnitude as the corresponding classical roots and $|\hat{x}_j|$ are of the same order of magnitude as $|x_j|$. Then we have that $\EMBE(\hat{X}) = O(u)$ implies $\TBE(\hat{X}) = O(u)$.
\end{theorem}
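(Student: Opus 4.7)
The plan is to exploit the definition of $\EMBE$ directly and reduce everything to a triangle inequality together with a perturbation estimate for elementary symmetric polynomials. Set $\varepsilon = \EMBE(\hat{X}) = O(u)$ and let $\tilde{X} = \{\tilde{x}_1, \ldots, \tilde{x}_d\}$ together with the polynomial $\tilde{f}$ be a witness to this value. Then for each $i = 0, \ldots, d-1$ one has
$$|c_i - \hat{c}_i| \leq |c_i - \tilde{c}_i| + |\tilde{c}_i - \hat{c}_i|.$$
The first summand is at most $\varepsilon |c_i|$ by the EMBE bound on coefficients, so after dividing by $r_i |c_i|$ and using $r_i \geq 1$ it contributes $O(u)$ to the $i$-th entry of $\TBE(\hat{X})$. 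All the work lies in the second summand.

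To control $|\tilde{c}_i - \hat{c}_i|$, I would write $\tilde{c}_i = (-1)^{d-i} c_d\, e_{d-i}(\tilde{x}_1, \ldots, \tilde{x}_d)$ and likewise for $\hat{c}_i$, with $e_{d-i}$ the elementary symmetric polynomial of degree $d-i$. The EMBE bound on the roots gives $\hat{x}_j = \tilde{x}_j(1 + \delta_j)$ with $|\delta_j| \leq \varepsilon$, and each monomial of $\hat{c}_i$ expands as $\hat{x}_{j_1} \cdots \hat{x}_{j_{d-i}} = \tilde{x}_{j_1} \cdots \tilde{x}_{j_{d-i}} \prod_k (1+\delta_{j_k})$. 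Collecting terms and using the triangle inequality yields
$$|\tilde{c}_i - \hat{c}_i| \leq |c_d|\,\bigl((1+\varepsilon)^{d-i}-1\bigr)\, e_{d-i}(|\tilde{x}_1|, \ldots, |\tilde{x}_d|) \;=\; |c_d|\, O(\varepsilon)\, e_{d-i}(|\tilde{x}_1|, \ldots, |\tilde{x}_d|),$$
where the hidden constant depends only on the fixed degree $d$.

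Now the two order-of-magnitude hypotheses enter. Since $|\hat{x}_j| = \Theta(|x_j|)$ and $|\tilde{x}_j| = |\hat{x}_j|(1+O(u))$, we have $|\tilde{x}_j| = \Theta(|x_j|) = \Theta(\tau_j)$, where $\tau_j$ denotes the $j$-th tropical root of $f$. Hence $e_{d-i}(|\tilde{x}_1|, \ldots, |\tilde{x}_d|) = \Theta(e_{d-i}(\tau_1, \ldots, \tau_d))$. Once the definition of $r_i$ from Section~\ref{sec:generald} is unpacked, one expects $r_i$ to agree up to universal constants with $e_{d-i}(\tau_1, \ldots, \tau_d)\,|c_d|/|c_i|$, i.e.\ with the ratio between the tropical upper bound on $|c_i|/|c_d|$ and the true value. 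Plugging this in gives $|\tilde{c}_i - \hat{c}_i|/(r_i |c_i|) = O(u)$, and combining with the first summand yields $\TBE(\hat{X}) = O(u)$.

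The main obstacle is the identification $r_i \asymp e_{d-i}(\tau_1, \ldots, \tau_d)\,|c_d|/|c_i|$; the two hypotheses in the theorem are precisely what make this identification hold. Without the assumption that tropical roots track the true magnitudes $|x_j|$, the $\tau_j$ can be much larger or smaller than $|x_j|$, breaking the comparison between $e_{d-i}(|\tilde{x}|)$ and $r_i|c_i|/|c_d|$; and without $|\hat{x}_j| = \Theta(|x_j|)$ the passage from $\tilde{x}$ to the tropical roots of $f$ cannot be made. Once the correspondence between $r_i$ and $e_{d-i}(\tau_1, \ldots, \tau_d)\,|c_d|/|c_i|$ is recorded as a lemma from the tropical setup, the remainder of the argument is the routine symmetric-polynomial perturbation estimate above.
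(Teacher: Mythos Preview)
Your proposal is correct and follows essentially the same route as the paper: the triangle inequality $|c_i-\hat c_i|\le|c_i-\tilde c_i|+|\tilde c_i-\hat c_i|$, the elementary-symmetric-polynomial perturbation bound for the second piece (the paper's Lemma~\ref{lem:symmfunc}), and then the identification of $r_i$ with $\sigma_{d-i}(|\tilde x|)\,|c_d|/|c_i|$ via the tropical hypothesis (the paper's Theorem~\ref{thm:EMBEimpliesTBE}, where this identification is made explicit through the telescoping computation $\log r_i=v_d+\sum_{k=0}^{d-i-1}\tau_{d-k}-v_i$). One small notational slip: in the paper's convention the tropical roots $\tau_j$ live on the log scale, so your $\Theta(\tau_j)$ and $e_{d-i}(\tau_1,\dots,\tau_d)$ should read $\Theta(\exp(\tau_j))$ and $e_{d-i}(\exp(\tau_1),\dots,\exp(\tau_d))$.
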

The strategy for proving Theorem \ref{thm:simplethm1} will also allow us to prove that, under the assumptions of the theorem, $\EMBE(\hat{X}) = O(u)$ implies $\NBE(\hat{X}) = O(u)$. This was proved in \cite{mastronardi2015revisiting} for the case where $d = 2$. Under some stronger assumptions, we also prove the reverse implication.
\begin{theorem} \label{thm:simplethm2}
Assume that the tropical roots of $f$ are of the same order of magnitude as the corresponding classical roots and $|\hat{x}_j|$ are of the same order of magnitude as $|x_j|$. Moreover, assume that for each $x_j \in X$, there are two terms $c_{\beta'}x_j^{\beta'}$ and $c_{\beta}x_j^\beta$ of $f(x_j)$ such that 
$$ |c_ix_j^i| \ll |c_{\beta'}x_j^{\beta'}| \quad \text{and} \quad |c_ix_j^i| \ll |c_{\beta}x_j^\beta|, \quad \text{for all } i \neq \beta', \beta.$$
Then we have that $\TBE(\hat{X}) = O(u)$ implies $\EMBE(\hat{X}) = O(u)$.
\end{theorem}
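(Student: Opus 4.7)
The natural target is to take $\tilde X = X$ in the definition of $\EMBE(\hat X)$, so that $\tilde f = f$, the coefficient inequalities $|c_i - \tilde c_i| \leq \varepsilon |c_i|$ hold trivially, and only the root inequalities $|\hat x_j - \tilde x_j| \leq \varepsilon |\tilde x_j|$ remain. The whole proof therefore reduces to extracting, from $\TBE(\hat X) = O(u)$ and the stated hypotheses, a bijection between $X$ and $\hat X$ satisfying $|\hat x_j - x_j| = O(u)\,|x_j|$ for every $j$.

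My plan is the following. First, I would interpret the two-dominant-terms hypothesis combinatorially: for each $x_j$ the pair $(\beta,\beta')$ consists of the endpoints of the edge of the Newton polygon of $f$ whose slope corresponds to the tropical root matching $|x_j|$. From the definition of the tropical weights $r_i$ recalled in Section \ref{sec:generald}, one then checks $r_\beta, r_{\beta'} = O(1)$ at these vertex indices. Second, I would exploit $f(x_j) = 0$ together with the TBE bound to obtain
$$|\hat f(x_j)| \leq \sum_{i=0}^{d-1} |\hat c_i - c_i|\,|x_j|^i \leq O(u)\sum_{i=0}^{d-1} r_i\,|c_i x_j^i|,$$
and use the assumption that the tropical roots agree in magnitude with the classical ones to conclude $\sum_i r_i\,|c_i x_j^i| = O(|c_\beta x_j^\beta|)$, so that $|\hat f(x_j)| = O(u)\,|c_\beta x_j^\beta|$. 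Third, I would apply Rouch\'e's theorem on a disk of radius $O(u)|x_j|$ around each $x_j$, using that the two dominant coefficients of $\hat f$ satisfy $\hat c_\beta/c_\beta = 1 + O(u)$ and $\hat c_{\beta'}/c_{\beta'} = 1 + O(u)$ by Step 1, to produce a root of $\hat f$ in each such disk. Finally, $|\hat x_j| = \Theta(|x_j|)$ forces each element of $\hat X$ into the correct magnitude cluster, and relabeling so that $\hat x_j$ is paired with the $x_j$ whose disk contains it furnishes the desired bijection, hence $\EMBE(\hat X) = O(u)$.

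The main obstacle will be the Rouch\'e step when $|\beta - \beta'| > 1$, i.e.\ when several classical roots share a tropical root and therefore share the dominant pair $(\beta,\beta')$. One must make the perturbation estimates uniformly quantitative over the full cluster: show that the subdominant terms of both $f$ and $\hat f$ remain small on the union of disks of radius $O(u)|x_j|$, that the $|\beta-\beta'|$ roots of the two-term model $c_\beta z^\beta + c_{\beta'} z^{\beta'}$ move by at most $O(u)$ relatively when the two coefficients are perturbed by $1 + O(u)$, and that the resulting roots of $\hat f$ are distinct. The tropical input in Step~2 is also delicate: verifying that the weights $r_i$ never inflate a subdominant term $|c_i x_j^i|$ above the dominant term by more than a constant factor is precisely what justifies collapsing the whole TBE sum to a single controlling term, and is the place where the hypothesis that the tropical and classical root magnitudes coincide is used most strongly.
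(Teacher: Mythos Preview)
Your high-level plan coincides with the paper's: both take $\tilde X = X$, so that the statement reduces to a forward-error bound $|\hat x_j - x_j| = O(u)\,|x_j|$. The paper obtains this not via Rouch\'e but by expanding $\hat f(x_j + \hat{\Delta} x_j)=0$ into a polynomial equation $\hat{E}(\hat{\Delta} x_j)=0$ in the increment, bounding the coefficients of $\hat{E}$ (Lemmas~\ref{lem:derivatives} and~\ref{lem:Ecoeffs}), and reading off its smallest tropical root. The pivotal estimate in both approaches is the same: $|x_j f'(x_j)| \approx (\beta-\beta')\,|c_\beta x_j^\beta|$, which is Lemma~\ref{lem:derivatives} with $k=1$. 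It comes from writing $x_j f'(x_j)=\sum_i i\,c_i x_j^i$, isolating the two dominant terms, and using $f(x_j)=0$ to see that $c_\beta x_j^\beta + c_{\beta'} x_j^{\beta'} = -\sum_{i\ne\beta,\beta'}c_i x_j^i$ is negligible, so only $(\beta - \beta')\,c_\beta x_j^\beta$ survives.

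Your Rouch\'e route works once this estimate is in place, but your sketched resolution of the cluster case does not supply it. To run Rouch\'e on a disk of radius $Ru\,|x_j|$ about $x_j$ you need $|f(z)| > |\hat f(z)-f(z)| = O(u)\,|c_\beta x_j^\beta|$ on the boundary, and the left side is governed by $|f'(x_j)|$. Going through the two-term model $g = c_\beta z^\beta + c_{\beta'}z^{\beta'}$ instead gives only $O(\eta)$ relative closeness between the roots of $f$ and those of $g$ (and likewise between $\hat f$ and $\hat g$), where $\eta = \max_{i\ne\beta,\beta'}|c_i x_j^i|/|c_\beta x_j^\beta|$; the hypothesis says $\eta\ll 1$ but not $\eta = O(u)$, so the chain $f \to g \to \hat g \to \hat f$ loses the $O(u)$ conclusion at its first and last links. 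With the derivative bound in hand, Rouch\'e applies on each small disk directly, the cluster never needs special treatment, and your matching step is immediate since those disks are pairwise disjoint.
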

We will give numerical evidence that Theorem \ref{thm:simplethm2} holds without the extra assumption on the polynomial $f$. In summary, we have the following diagram, where the arrows are implications.
\begin{equation} \label{eq:diagram}
 \begin{tikzcd}
 \EBE(\hat{X}) = O(u) \arrow[r] \arrow[d] \arrow[dr] & \TBE(\hat{X}) = O(u) \arrow[dl,mycolor1] \arrow[d,mycolor1,dashed] \\
 \NBE(\hat{X}) = O(u) & \EMBE(\hat{X}) = O(u) \arrow[u,mycolor1,shift left = 0.2cm] \arrow[l,mycolor1]
 \end{tikzcd}
\end{equation}
Here, the black arrows are implications which are obvious from the definitions. The blue arrows are implications that we prove under the assumptions of Theorem \ref{thm:simplethm1}. The dashed arrow represents Theorem \ref{thm:simplethm2}, which uses stronger assumptions. 

This article is organized as follows. In the next section we prove the equivalence of the tropical and element-wise mixed backward error measures in the case where $d = 2$. This can be seen as an extension of the analysis in \cite{mastronardi2015revisiting}, and it is instructive for the general case. The proofs for general $d$ are given in Section \ref{sec:generald}. In Section \ref{sec:numexp} we show some computational experiments and give numerical evidence that Theorem \ref{thm:simplethm2} holds under weaker assumptions.

\section{Backward Error for Quadratic Polynomials} \label{sec:d2}

In this section, we prove that the \emph{element-wise mixed backward error} (EMBE) as introduced by Mastronardi and Van Dooren \cite{mastronardi2015revisiting} and the \emph{tropical backward error} (TBE) from \cite{vanbarel2019tropical} are equivalent backward error measures for the roots of a quadratic polynomial 
\begin{equation*}\label{eq:quadratic-poly}
 f = ax^2 + bx + c = a(x-x_1)(x-x_2).
\end{equation*}
For simplicity, we assume that $a,b,c \in \C^* = \C \setminus \{0\}$. 
For the approximate roots $ \hat{X} = \{\hat{x}_1, \hat{x}_2 \}$ of $f$, $\EMBE(\hat{X})$ is the smallest number $\varepsilon \geq 0$ such that there exists $\tilde{X}= \{\tilde{x}_1, \tilde{x}_2 \}$ with 
\begin{align*}  
\tilde{f} = a(x-\tilde{x}_1)(x-\tilde{x}_2) &= ax^2 + \tilde{b} x + \tilde{c},\\
    |\hat{x}_1- \tilde{x}_1| \leq \varepsilon |\tilde{x}_1|, \quad &|\hat{x}_2- \tilde{x}_2| \leq \varepsilon |\tilde{x}_2|, \\
    |b - \tilde{b}| \leq \varepsilon |b|, \quad &|c - \tilde{c}| \leq \varepsilon |c|.
\end{align*}
In analogy with \cite{vanbarel2019tropical}, we define the $\TBE(\hat{X})$ to be the smallest number $\varepsilon \geq 0$ such that 
\begin{align*}
\hat{f} = a(x-\hat{x}_1)(x-\hat{x}_2) = ax^2 + \hat{b} x + \hat{c}\\
|b - \hat{b}| \leq r_b \varepsilon |b|, \quad |c - \hat{c}| \leq \varepsilon |c|,
\end{align*}
where $r_b = \max(1, \sqrt{|ac|}/|b|)$. The definition of $r_b$ will be clarified in Section \ref{sec:generald}. For the approximate roots $\hat{x}_j$ and $\tilde{x}_j$ in these definitions, we will assume that the order of magnitude of $|\tilde{x}_j|$ and $|\hat{x}_j|$ is the same as the order of magnitude of $|x_j|$ (that is, we allow relative errors of size at most 1). Note that by the definition of EMBE, it is sufficient that this is satisfied for $|\hat{x}_j|$. 

We will now relate these two error measures. Let $\varepsilon = \EMBE(\hat{X})$. We observe 
\begin{align*}
\frac{\hat{c} - c}{c} &= \frac{\hat{c} - \tilde{c}}{c} + \frac{\tilde{c} - c}{c} = \frac{\hat{x}_1 \hat{x}_2 - \tilde{x}_1 \tilde{x}_2}{x_1x_2} + \frac{\tilde{c} - c}{c} \\
&= \frac{(\tilde{x}_1 + (\hat{x}_1 - \tilde{x}_1))(\tilde{x}_2 + (\hat{x}_2 - \tilde{x}_2)) - \tilde{x}_1 \tilde{x}_2}{x_1x_2}   + \frac{\tilde{c} - c}{c} \,.
\end{align*}
Since $|c - \tilde{c}| \leq \varepsilon |c|$, we have $|1-\frac{\tilde{x}_1 \tilde{x}_2}{x_1x_2}| \leq \varepsilon$ and it follows
$$\left | \frac{\hat{c} - c}{c} \right | \leq (2 \varepsilon + \varepsilon^2) \left | \frac{\tilde{x}_1\tilde{x}_2}{x_1x_2} \right | + \varepsilon \lesssim 3\varepsilon \,.$$
For the coefficient $\hat{b}$, we find in an analogous way that 
\begin{equation}\label{eq:quad-b}
 \left | \frac{\hat{b} - b}{b}  \right | \leq \varepsilon \left ( 1 + \frac{|\tilde{x}_1| + |\tilde{x}_2|}{|x_1 + x_2|} \right ) .
\end{equation}  
If the solutions $x_1$ and $x_2$ have different orders of magnitude, there does not occur any cancellation in the denominator of the right hand side of \eqref{eq:quad-b} and this implies $\left | \frac{\hat{b} - b}{b}  \right | \lesssim 2 \varepsilon$. However, if the order of magnitude of both solutions is the same, the factor standing with $\varepsilon$ may be significantly larger than 1 due to cancellation. We will now make this precise and relate this to the number $r_b$. 
We define $\gamma = x_1/x_2$. By the assumption that $|\tilde{x}_i|$ is of the same order of magnitude as $|x_i|$, \eqref{eq:quad-b} can be written as
\begin{equation} \label{eq:quad-ineq0}
\left | \frac{\hat{b} - b}{b}  \right | \leq \varepsilon \left ( 1 + K \frac{|\gamma| + 1}{|\gamma + 1|} \right )
\end{equation}
with $K$ a small constant. We assume, without loss of generality, that $0 < |\gamma| \leq 1$. Note that we have for $0<|\gamma|<1$ the inequality
\begin{equation} \label{eq:quad-ineq1}
\frac{|\gamma| + 1}{|\gamma + 1|} \leq \frac{|\gamma| + 1}{||\gamma| - 1|} = \frac{|\gamma| + 1}{1-|\gamma|}, 
\end{equation}
which follows from $|x-y| \geq ||x| - |y||, \forall x, y \in \C$ applied to $x = \gamma, y = -1$. Assume that 
$$\frac{\sqrt{|ac|}}{|b|} = \frac{\sqrt{|\gamma|}}{|\gamma+1|} \leq 1.$$
In this case, we have 
\begin{equation} \label{eq:quad-ineq2}
\frac{|\gamma| + 1}{|\gamma + 1|} \leq \frac{|\gamma| + 1}{\sqrt{|\gamma|}}.
\end{equation}
Now, assume 
$$\frac{\sqrt{|ac|}}{|b|} = \frac{\sqrt{|\gamma|}}{|\gamma+1|} \geq 1.$$
In this case
\begin{equation} \label{eq:quad-ineq3}
 \frac{|\gamma| + 1}{|\gamma + 1|} \left ( \frac{\sqrt{|ac|}}{|b|} \right )^{-1}  \leq \frac{|\gamma| + 1}{|\gamma + 1|} \leq  \frac{|\gamma| + 1}{1-|\gamma|}.
 \end{equation}
Also, 
\begin{equation} \label{eq:quad-ineq4}
\frac{|\gamma| + 1}{|\gamma + 1|} \left ( \frac{\sqrt{|ac|}}{|b|} \right )^{-1} = \frac{|\gamma| + 1}{|\gamma + 1|} \left ( \frac{\sqrt{|\gamma|}}{|\gamma+1|}  \right )^{-1} = \frac{|\gamma| + 1}{\sqrt{|\gamma|}}.
\end{equation} 
Using \eqref{eq:quad-ineq1}-\eqref{eq:quad-ineq4}, we find that 
$$\frac{|\gamma| + 1}{|\gamma + 1|} r_b^{-1} \leq \min \left ( \frac{|\gamma| + 1}{1-|\gamma|}, \frac{|\gamma| + 1}{\sqrt{|\gamma|}} \right ),$$
which gives
$$\frac{|\gamma| + 1}{|\gamma + 1|} r_b^{-1} \leq \begin{cases} \frac{\alpha + 1}{1 - \alpha} = \sqrt{5} & 0 < |\gamma| \leq \alpha\\
\frac{\alpha + 1}{\sqrt{\alpha}} = \sqrt{5} & \alpha \leq |\gamma| \leq 1
\end{cases} \quad \Rightarrow ~ \frac{|\gamma| + 1}{|\gamma + 1|}r_b^{-1} \leq \sqrt{5},$$
where $\alpha = \frac{3}{2} - \frac{\sqrt{5}}{2}$. This is illustrated in Figure \ref{fig:quadbounds}.
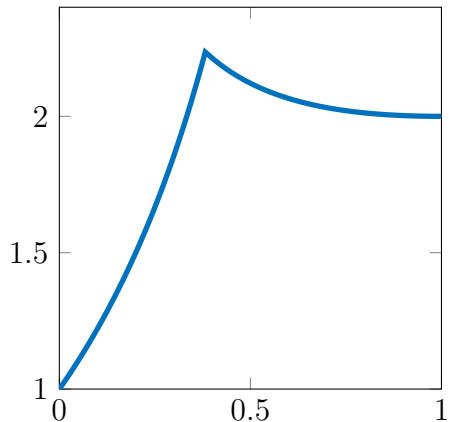
\begin{figure}[h!]
\centering
%
%
\definecolor{mycolor1}{rgb}{0.00000,0.44700,0.74100}%
\begin{tikzpicture}

\begin{axis}[%
width=2in,
height=2in,
at={(0.772in,0.516in)},
scale only axis,
xmin=0,
xmax=1,
xtick = {0,0.5,1},
ymin=1,
ymax=2.4,
axis background/.style={fill=white}
]
\addplot [color=mycolor1, line width=2.0pt, forget plot]
  table[row sep=crcr]{%
0	1\\
0.0050251256281407	1.01010101010101\\
0.0100502512562814	1.02030456852792\\
0.0150753768844221	1.03061224489796\\
0.0201005025125628	1.04102564102564\\
0.0251256281407035	1.05154639175258\\
0.0301507537688442	1.06217616580311\\
0.0351758793969849	1.07291666666667\\
0.0402010050251256	1.08376963350785\\
0.0452261306532663	1.09473684210526\\
0.050251256281407	1.10582010582011\\
0.0552763819095477	1.11702127659574\\
0.0603015075376884	1.1283422459893\\
0.0653266331658292	1.13978494623656\\
0.0703517587939698	1.15135135135135\\
0.0753768844221105	1.16304347826087\\
0.0804020100502513	1.17486338797814\\
0.085427135678392	1.18681318681319\\
0.0904522613065327	1.19889502762431\\
0.0954773869346734	1.21111111111111\\
0.100502512562814	1.22346368715084\\
0.105527638190955	1.23595505617978\\
0.110552763819095	1.24858757062147\\
0.115577889447236	1.26136363636364\\
0.120603015075377	1.27428571428571\\
0.125628140703518	1.28735632183908\\
0.130653266331658	1.30057803468208\\
0.135678391959799	1.31395348837209\\
0.14070351758794	1.32748538011696\\
0.14572864321608	1.34117647058824\\
0.150753768844221	1.35502958579882\\
0.155778894472362	1.36904761904762\\
0.160804020100503	1.38323353293413\\
0.165829145728643	1.39759036144578\\
0.170854271356784	1.41212121212121\\
0.175879396984925	1.42682926829268\\
0.180904522613065	1.44171779141104\\
0.185929648241206	1.45679012345679\\
0.190954773869347	1.47204968944099\\
0.195979899497487	1.4875\\
0.201005025125628	1.50314465408805\\
0.206030150753769	1.51898734177215\\
0.21105527638191	1.53503184713376\\
0.21608040201005	1.55128205128205\\
0.221105527638191	1.56774193548387\\
0.226130653266332	1.58441558441558\\
0.231155778894472	1.60130718954248\\
0.236180904522613	1.61842105263158\\
0.241206030150754	1.63576158940397\\
0.246231155778894	1.65333333333333\\
0.251256281407035	1.67114093959732\\
0.256281407035176	1.68918918918919\\
0.261306532663317	1.70748299319728\\
0.266331658291457	1.72602739726027\\
0.271356783919598	1.7448275862069\\
0.276381909547739	1.76388888888889\\
0.281407035175879	1.78321678321678\\
0.28643216080402	1.80281690140845\\
0.291457286432161	1.82269503546099\\
0.296482412060302	1.84285714285714\\
0.301507537688442	1.86330935251799\\
0.306532663316583	1.88405797101449\\
0.311557788944724	1.90510948905109\\
0.316582914572864	1.92647058823529\\
0.321608040201005	1.94814814814815\\
0.326633165829146	1.97014925373134\\
0.331658291457286	1.99248120300752\\
0.336683417085427	2.01515151515152\\
0.341708542713568	2.0381679389313\\
0.346733668341709	2.06153846153846\\
0.351758793969849	2.08527131782946\\
0.35678391959799	2.109375\\
0.361809045226131	2.13385826771654\\
0.366834170854271	2.15873015873016\\
0.371859296482412	2.184\\
0.376884422110553	2.20967741935484\\
0.381909547738693	2.23577235772358\\
0.386934673366834	2.22965249709062\\
0.391959798994975	2.22334024617849\\
0.396984924623116	2.21719919411749\\
0.402010050251256	2.21122365450699\\
0.407035175879397	2.20540819335927\\
0.412060301507538	2.19974761502319\\
0.417085427135678	2.19423694905119\\
0.422110552763819	2.18887143793587\\
0.42713567839196	2.18364652564929\\
0.4321608040201	2.17855784692355\\
0.437185929648241	2.17360121721702\\
0.442211055276382	2.16877262331486\\
0.447236180904523	2.16406821451735\\
0.452261306532663	2.15948429437303\\
0.457286432160804	2.15501731291746\\
0.462311557788945	2.15066385938152\\
0.467336683417085	2.14642065533614\\
0.472361809045226	2.14228454824306\\
0.477386934673367	2.13825250538346\\
0.482412060301508	2.1343216081388\\
0.487437185929648	2.13048904659992\\
0.492462311557789	2.12675211448249\\
0.49748743718593	2.12310820432849\\
0.50251256281407	2.11955480297492\\
0.507537688442211	2.1160894872725\\
0.512562814070352	2.11270992003806\\
0.517587939698492	2.109413846226\\
0.522613065326633	2.10619908930475\\
0.527638190954774	2.10306354782562\\
0.532663316582915	2.1000051921719\\
0.537688442211055	2.09702206147747\\
0.542713567839196	2.09411226070423\\
0.547738693467337	2.09127395786918\\
0.552763819095477	2.08850538141188\\
0.557788944723618	2.08580481769422\\
0.562814070351759	2.08317060862462\\
0.5678391959799	2.08060114939951\\
0.57286432160804	2.07809488635523\\
0.577889447236181	2.07565031492419\\
0.582914572864322	2.07326597768922\\
0.587939698492462	2.07094046253077\\
0.592964824120603	2.06867240086165\\
0.597989949748744	2.06646046594455\\
0.603015075376884	2.06430337128781\\
0.608040201005025	2.06219986911516\\
0.613065326633166	2.06014874890547\\
0.618090452261307	2.05814883599877\\
0.623115577889447	2.05619899026501\\
0.628140703517588	2.0542981048323\\
0.633165829145729	2.05244510487146\\
0.638190954773869	2.05063894643405\\
0.64321608040201	2.04887861534109\\
0.648241206030151	2.04716312611987\\
0.653266331658292	2.04549152098646\\
0.658291457286432	2.04386286887161\\
0.663316582914573	2.04227626448782\\
0.668341708542714	2.04073082743572\\
0.673366834170854	2.03922570134758\\
0.678391959798995	2.03776005306637\\
0.683417085427136	2.03633307185853\\
0.688442211055276	2.0349439686588\\
0.693467336683417	2.03359197534576\\
0.698492462311558	2.03227634404639\\
0.703517587939699	2.03099634646847\\
0.708542713567839	2.02975127325944\\
0.71356783919598	2.02854043339052\\
0.718592964824121	2.02736315356487\\
0.723618090452261	2.02621877764883\\
0.728643216080402	2.02510666612498\\
0.733668341708543	2.02402619556632\\
0.738693467336683	2.02297675813029\\
0.743718592964824	2.02195776107203\\
0.748743718592965	2.02096862627593\\
0.753768844221106	2.02000878980458\\
0.758793969849246	2.01907770146452\\
0.763819095477387	2.01817482438791\\
0.768844221105528	2.01729963462961\\
0.773869346733668	2.0164516207788\\
0.778894472361809	2.0156302835847\\
0.78391959798995	2.01483513559571\\
0.78894472361809	2.01406570081141\\
0.793969849246231	2.01332151434695\\
0.798994974874372	2.01260212210919\\
0.804020100502513	2.01190708048421\\
0.809045226130653	2.01123595603578\\
0.814070351758794	2.01058832521409\\
0.819095477386935	2.00996377407466\\
0.824120603015075	2.00936189800678\\
0.829145728643216	2.00878230147117\\
0.834170854271357	2.00822459774654\\
0.839195979899497	2.00768840868462\\
0.844221105527638	2.00717336447339\\
0.849246231155779	2.00667910340821\\
0.85427135678392	2.00620527167038\\
0.85929648241206	2.0057515231131\\
0.864321608040201	2.00531751905434\\
0.869346733668342	2.00490292807641\\
0.874371859296482	2.0045074258321\\
0.879396984924623	2.00413069485687\\
0.884422110552764	2.00377242438721\\
0.889447236180904	2.00343231018461\\
0.894472361809045	2.00311005436519\\
0.899497487437186	2.00280536523463\\
0.904522613065327	2.0025179571282\\
0.909547738693467	2.00224755025587\\
0.914572864321608	2.00199387055211\\
0.919597989949749	2.00175664953031\\
0.924623115577889	2.00153562414171\\
0.92964824120603	2.00133053663849\\
0.934673366834171	2.00114113444116\\
0.939698492462312	2.00096717000978\\
0.944723618090452	2.00080840071915\\
0.949748743718593	2.00066458873761\\
0.954773869346734	2.00053550090953\\
0.959798994974874	2.00042090864118\\
0.964824120603015	2.00032058778995\\
0.969849246231156	2.00023431855688\\
0.974874371859296	2.00016188538218\\
0.979899497487437	2.00010307684386\\
0.984924623115578	2.00005768555923\\
0.989949748743719	2.00002550808925\\
0.994974874371859	2.00000634484553\\
1	2\\
};
\end{axis}
\end{tikzpicture}%
\caption{The value of $\min \left ( \frac{|\gamma| + 1}{1-|\gamma|}, \frac{|\gamma| + 1}{\sqrt{|\gamma|}} \right )$ for $0 \leq |\gamma| \leq 1$.}
\label{fig:quadbounds}
\end{figure}
It follows immediately from this observation and \eqref{eq:quad-ineq0} that 
$$  \left | \frac{\hat{b} - b}{b}  \right | \leq \varepsilon \left (\sqrt{5}K + r_b^{-1} \right ) r_b \leq \varepsilon (\sqrt{5}K+1) r_b.$$
Figure \ref{fig:levelplots} illustrates the values of $r_b$ and $\frac{|\gamma| + 1}{|\gamma + 1|} r_b^{-1}$ as a function of $\gamma$ in the unit disk. 
\begin{figure}[h!]
\centering
\includegraphics[scale=0.5]{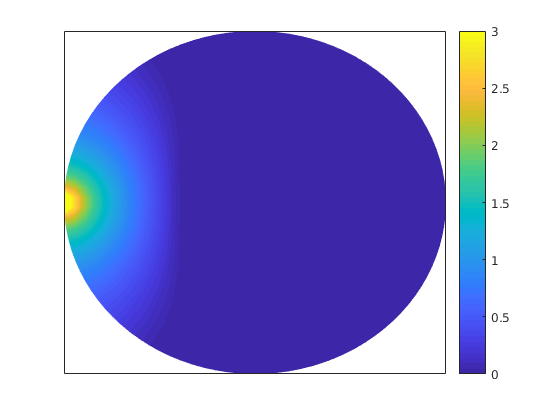}
\includegraphics[scale=0.5]{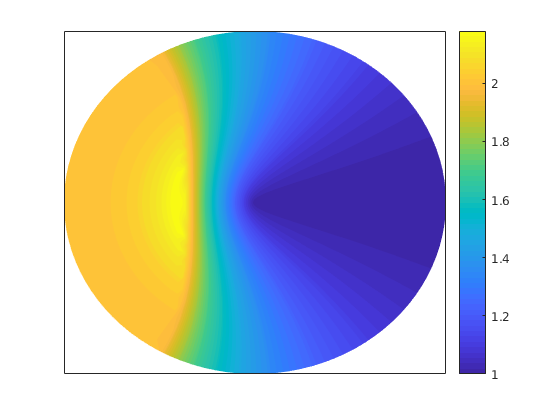}
\caption{Left: illustration of the value of $\log(r_b)$ as a function of $\gamma$ for $0 \leq |\gamma| \leq 1$. Right: illustration of the value of $\frac{|\gamma| + 1}{|\gamma + 1|} r_b^{-1}$ for $0 \leq |\gamma| \leq 1$.}
\label{fig:levelplots}
\end{figure}
This shows that element-wise mixed backward stability implies tropical backward stability. The converse also holds, as we will now show.
Let $\TBE(\hat{X}) = \varepsilon$ for the computed roots $\hat{X} = \{\hat{x}_1, \hat{x}_2\}$.
If $r_b = 1$ then we can take $\tilde{x}_1 = \hat{x}_1$ and $\tilde{x}_2 = \hat{x}_2$ such that $\EMBE(\hat{X}) \leq \TBE(\hat{X})$.
Suppose now that $r_b = \sqrt{|ac|}/|b| > 1$ and without loss of generality assume $|\hat{x}_1| \le |\hat{x}_2|$.
$\TBE(\hat{X}) = \varepsilon$ implies that there exists $\hat{\delta}_b \in \C$ with $|\hat{\delta}_b| \leq \varepsilon$ such that
$$- \hat{b} = a(\hat{x}_1 + \hat{x}_2) =  -b (1 + r_b \hat{\delta}_b) \,.$$
Let $\tilde{x}_1 = \hat{x}_1 + \frac{b}{a} r_b \hat{\delta}_b$ and $\tilde{x}_2 = \hat{x}_2$.
Then we have
\begin{equation*}
  - \tilde{b} = a(\tilde{x}_1 + \tilde{x}_2) =  a(\hat{x}_1 + \frac{b}{a} r_b \hat{\delta}_b + \hat{x}_2)
  = a(\hat{x}_1 + \hat{x}_2)  + b r_b \hat{\delta}_b 
  = -b (1 + r_b \hat{\delta}_b) + b r_b \hat{\delta}_b = -b \,.
\end{equation*}
Note that
$$
|\tilde{x}_1 - \hat{x}_1| = \left |\frac{b}{a} r_b \hat{\delta}_b \right | = \sqrt{\left| \frac{c}{a}\right|}  |\hat{\delta}_b| = \sqrt{|x_1 x_2|} |\hat{\delta}_b| \lesssim |\tilde{x}_1| \varepsilon \,.
$$
We also have
$$
\tilde{c} -\hat{c} = a \tilde{x}_1 \tilde{x}_2  - a\hat{x}_1 \hat{x}_2 = a (\hat{x}_1 + \frac{b}{a} r_b \hat{\delta}_b) \hat{x}_2  - a\hat{x}_1 \hat{x}_2 = b r_b \hat{\delta}_b \hat{x}_2
$$
from which we get
$$|\tilde{c} -\hat{c}| = |a| \left| \frac{b}{a} r_b \hat{\delta}_b \hat{x}_2 \right | \lesssim \varepsilon |a||\tilde{x}_1 \tilde{x}_2| = \varepsilon | \tilde{c}| \,.$$
We conclude that tropical backward stability implies element-wise mixed backward stability. 
\begin{remark}
We assumed $a,b,c \in \C^*$ in this discussion. If $a = 0$, we are solving a linear equation and there is nothing to prove. If $c = 0$, the root $x_1 = 0$ can be deflated and we are again left with a linear equation. If $b = 0$, a similar derivation can be made. We omit the details but give a brief outline. We replace the conditions on $\tilde{b}$ and $\hat{b}$ in the definitions of $\EMBE(\hat{X})$ and $\TBE(\hat{X})$ respectively by $|\tilde{b}| \leq \varepsilon$ and $|\hat{b}| \leq r_b \varepsilon$ where $r_b = \max(1, \sqrt{|ac|}) = \max(1,|a x_1|)$ (note that in this case $|x_1| = |x_2|$). One derives bounds in a similar way for the implication $\EMBE(\hat{X}) = \varepsilon \Rightarrow \TBE(\hat{X}) = O(\varepsilon)$. For the other implication, there is again nothing to prove when $r_b = 1$. When $r_b = |ax_1|$ we observe that we can write $\hat{b} = - r_b \hat{\delta}_b$ with $|\hat{\delta}_b| \leq \varepsilon$ and we set $\tilde{x}_1 = \hat{x}_1 + a^{-1}r_b \hat{\delta}_b, \tilde{x}_2 = \hat{x}_2$. 
\end{remark}
\noindent We arrive at the following statement. 
\begin{proposition}
Let $\hat{X} = \{\hat{x}_1, \hat{x}_2 \}$ be a set of approximations for the roots $X = \{x_1,x_2 \}$ of a quadratic polynomial $f = ax^2 + bx + c \in \C[x]$, where $a,c \neq 0$. Under the assumption that $|\hat{x}_i|$ has the same order of magnitude as $|x_i|$, $i=1,2$, we have that 
$$ \EMBE(\hat{X}) = O(u) \quad \textup{if and only if} \quad \TBE(\hat{X}) = O(u).$$
\end{proposition}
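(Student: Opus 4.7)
The proposition consolidates the two directions of the analysis in this section. The plan is to show both implications separately, and in the principal case $a,b,c \in \C^*$ the main difficulty in each direction lies in the regime where $x_1+x_2$ suffers cancellation (i.e.\ where $|b| \ll \sqrt{|ac|}$ and $r_b > 1$); this is precisely the regime where $\TBE$ and $\EMBE$ differ meaningfully from the raw $\EBE$.

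For $\EMBE(\hat{X}) \leq \varepsilon \Rightarrow \TBE(\hat{X}) = O(\varepsilon)$, my plan is to fix a witness $(\tilde{X},\tilde{b},\tilde{c})$ and split
\[ \frac{\hat{c}-c}{c} = \frac{\hat{c}-\tilde{c}}{c} + \frac{\tilde{c}-c}{c}, \qquad \frac{\hat{b}-b}{b} = \frac{\hat{b}-\tilde{b}}{b} + \frac{\tilde{b}-b}{b}, \]
expanding $\hat{x}_j = \tilde{x}_j + (\hat{x}_j - \tilde{x}_j)$ in the first term of each. The $c$-estimate falls out immediately from the triangle inequality and the hypothesis $|\tilde{x}_j| \asymp |x_j|$. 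The $b$-estimate is the heart of the matter: setting $\gamma = x_1/x_2$ with $0 < |\gamma| \leq 1$, it reduces to showing that $(|\gamma|+1)/|\gamma+1|$ is bounded by $r_b$ times a universal constant. I would split according to whether $r_b = 1$ or $r_b = \sqrt{|\gamma|}/|\gamma+1| > 1$; in the first case $|\gamma+1| \geq \sqrt{|\gamma|}$ gives the bound $(|\gamma|+1)/\sqrt{|\gamma|}$, in the second the identity $r_b^{-1} = |\gamma+1|/\sqrt{|\gamma|}$ together with $|\gamma+1| \geq 1-|\gamma|$ gives the bound $\min\bigl((|\gamma|+1)/(1-|\gamma|),(|\gamma|+1)/\sqrt{|\gamma|}\bigr)$. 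A direct computation shows that this minimum is at most $\sqrt{5}$ on $[0,1]$, attained at the crossover $\alpha = \tfrac{3}{2} - \tfrac{\sqrt{5}}{2}$.

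For the reverse implication, my plan is explicit construction. Given $\hat{\delta}_b, \hat{\delta}_c$ of modulus at most $\varepsilon$ with $\hat{b} = b(1+r_b\hat{\delta}_b)$ and $\hat{c} = c(1+\hat{\delta}_c)$, and assuming $|\hat{x}_1| \leq |\hat{x}_2|$, I would set $\tilde{x}_1 = \hat{x}_1 + (b/a)\,r_b\hat{\delta}_b$ and $\tilde{x}_2 = \hat{x}_2$, so that by direct computation $\tilde{b} = b$. The key algebraic identity is $|b r_b / a| = \sqrt{|c/a|} = \sqrt{|x_1 x_2|}$; combined with $|\hat{x}_i| \asymp |x_i|$ (and the fact that $r_b > 1$ forces $|x_1| \asymp |x_2| \asymp \sqrt{|x_1 x_2|}$), this yields $|\tilde{x}_1 - \hat{x}_1| \lesssim |\tilde{x}_1|\varepsilon$ and $|\tilde{c} - \hat{c}| \lesssim |\tilde{c}|\varepsilon$, whence $|c - \tilde{c}| \lesssim \varepsilon|c|$ by the triangle inequality. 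Absorbing the perturbation into the \emph{smaller} root is essential here. The degenerate case $b = 0$ is handled by the same absorbing construction as outlined in the remark, with $r_b = \max(1,|ax_1|)$.

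The main obstacle is the forward direction's $b$-estimate: without the weight $r_b$, the ratio $(|\gamma|+1)/|\gamma+1|$ is unbounded on the unit disk, and the case split must be sharp enough for the two complementary bounds to paste together into a universal constant. The reverse direction, while requiring the right choice of which root to perturb, is otherwise a direct algebraic verification and presents no further difficulty.
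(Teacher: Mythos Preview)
Your proposal is correct and follows the paper's argument essentially verbatim: the same splitting for $\hat c$ and $\hat b$, the same reduction to bounding $(|\gamma|+1)/|\gamma+1|$ by $r_b$ times a universal constant, the same $\sqrt{5}$ bound via the crossover at $\alpha=\tfrac{3}{2}-\tfrac{\sqrt{5}}{2}$, and the same explicit construction $\tilde x_1=\hat x_1+(b/a)r_b\hat\delta_b$, $\tilde x_2=\hat x_2$ for the reverse implication. One small point to tighten: in the case $r_b=1$ you record only the bound $(|\gamma|+1)/\sqrt{|\gamma|}$, which is not uniform on its own (it blows up as $|\gamma|\to 0$); you also need the always-valid reverse-triangle bound $(|\gamma|+1)/(1-|\gamma|)$ there, so that the $\min$ of the two governs both cases and the $\sqrt{5}$ estimate applies throughout---exactly as the paper does.
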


\section{Backward Error for Polynomials of General Degree} \label{sec:generald}
We now generalize the results of the previous section to polynomials of arbitrary degree $d$.
In the following let $f = \sum_{i=0}^d c_i x^i \in \C[x]$, $c_0, c_d \ne 0$, be a polynomial of degree $d$ with roots $X = \{x_1,\ldots,x_d \} \subset \C^*$ and let $\hat{X} = \{ \hat{x}_1, \ldots, \hat{x}_d \} \subset \C^*$ be the approximate roots. Without loss of generality we assume that the roots are labeled such that $|x_1| \le |x_2| \le \ldots \le |x_d|$.

We first formally generalize the notion of the element-wise mixed backward error due to Mastronardi and Van Dooren \cite{mastronardi2015revisiting} from quadratic polynomials to general polynomials of degree $d$.

\begin{definition}[Element-wise mixed backward error] \label{def:EMBE}
  The \emph{element-wise mixed backward error} of $\hat{X}$, denoted $\EMBE(\hat{X})$, is the smallest number $\varepsilon \geq 0$ such that there exist points $\tilde{X} = \{\tilde{x}_1,\ldots, \tilde{x}_d \}\subset \C^*$ satisfying $|\hat{x}_j - \tilde{x}_j| \le \varepsilon |\tilde{x}_j|$, $j = 1,\ldots,d$, and 
\[ \begin{cases}
    |c_i - \tilde{c}_i| \le \varepsilon |c_i|, & c_i \neq 0, \\
     |c_i - \tilde{c}_i| \le \varepsilon, & c_i = 0,
   \end{cases}
\]
  for all $i=0,\ldots,d-1$, where the $\tilde{c}_i$ are the coefficients of 
  $$\tilde{f} = c_d\prod_{j=1}^d (x - \tilde{x}_j) = c_d x^d + \sum_{i=0}^{d-1} \tilde{c}_{i}x^{i} \,.$$
\end{definition}
Before we can state the generalization of the tropical backward error for degree $d$ polynomials we need to introduce some definitions and concepts.
We define the tropical polynomial $\trop{f}(\tau)$ associated to  $f(x)$ as
\begin{equation} \label{eq:troppol}
\trop{f}: \R \cup \{ -\infty \} \rightarrow  \R \cup \{ -\infty \}, \quad \tau \mapsto  \max_{0 \le i \le d} v_i + i \tau 
\end{equation}
where $v_i = \log|c_i|$. The number $v_i$ is the \emph{valuation} of $c_i$ under the valuation map $\log|\cdot|$. Any base for the logarithm can be used in theory. We want to think of the image under the valuation map as the `order of magnitude' of the modulus of a complex number. In this paper, when we state that $\log|c| \approx 0, c \in \C,$ we mean that $|c|$ is of \emph{order 1}. In tropical geometry the map $\log|\cdot|$ is referred to as an Archimedean valuation. For a general introduction to tropical geometry we refer to \cite{sharify2011scaling,maclagan2015introduction}.
\begin{figure}[h]
  \begin{center}
    \includegraphics[width=0.8\textwidth]{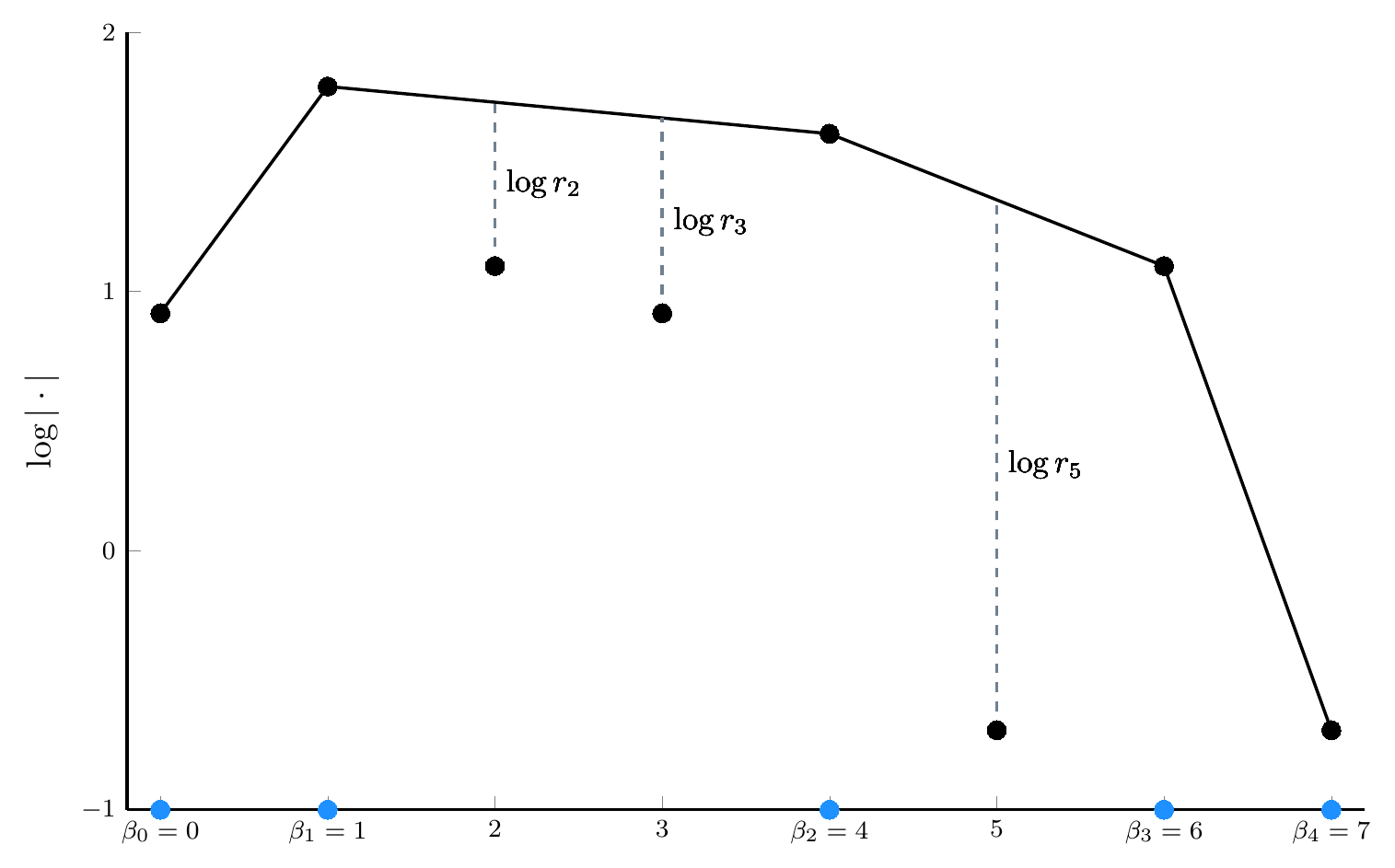}
  \end{center} 
  \caption{Consider $f(x) = \frac12x^7 + 3x^6 + \frac12 x^5 + 5x^4 +  \frac52 x^3 + 3x^2 + 6x + \frac52$.
  The figure depicts the upper convex hull of the lifted Newton polytope, the geometric derivation of the $r_i$ values and the 
  induced subdivision $\Delta = ~\{(\beta_0=0,\beta_1=1), (1, \beta_2=4), (4, \beta_3=6), (6, \beta_4=7)\}$.}\label{fig:newton-subdiv}
\end{figure}

The \emph{Newton polytope} of $f$ is the line segment $[0,d] \subset \R$. The convex hull of the points $\{(i, v_i)\}_{0\le i \le d} \subset \R^2$ is called the \emph{lifted} Newton polytope. We will consider the \emph{upper hull} of the lifted Newton polytope. For a specific example, this is shown as a solid black line in Figure \ref{fig:newton-subdiv}. The vertices of this upper hull are the points $(\beta_\ell, v_{\beta_\ell})$, $\ell=0,\ldots,s$, with
$$0 = \beta_0 < \beta_1 < \ldots < \beta_s = d \,.$$
We call the set $\Delta = \{(\beta_0, \beta_1), (\beta_1, \beta_2), \ldots, (\beta_{s-1}, \beta_s) \}$ the subdivision induced by the coefficients $c_i$, or the \emph{induced subdivision} for short.
We say that a point $\tau \in \R \cup \{ -\infty \}$ is a \emph{root} of $\trop{f}$ if the maximum in \eqref{eq:troppol} is attained at least twice. A root of $\trop{f}$ is called a \emph{tropical root} of $f$. The \emph{multiplicity} of a root $\tau$ of $\trop{f}$ is the number $\beta_\ell - \beta_{\ell-1}$ where $\beta_\ell$ and $\beta_{\ell-1}$ are the largest, respectively the smallest value of $i$ for which $v_i + i \tau$ is maximal.
Counted with multiplicity, $\trop{f}$ has $d$ roots $\tau_1, \ldots, \tau_d$ and we can give a closed formula for them.
For $\beta_{\ell - 1} < i \le \beta_{\ell}$ we have
$$\tau_i = \frac{1}{m_{\ell}}(v_{\beta_{\ell - 1}} - v_{\beta_{\ell}}) =
  \log \; \left|\frac{c_{\beta_{\ell - 1}}}{c_{\beta_{\ell}}}\right|^{\frac{1}{m_\ell}}  $$
where $m_\ell = \beta_{\ell} - \beta_{\ell - 1}$ is the multiplicity.
In particular, the definition implies $$\tau_1 = \tau_{\beta_0} = \cdots = \tau_{\beta_1} < \tau_{\beta_1+1} = \cdots = \tau_{\beta_2} < \cdots <\tau_{\beta_{s-1}+1} = \cdots = \tau_{\beta_s}= \tau_d.$$
Tropical roots of polynomials are used for scaling (matrix) polynomial eigenvalue problems, see for instance \cite{bini2013locating,gaubert2009tropical,noferini2015tropical}.

Furthermore, for $i=0,\ldots,d$ we define the constants
$$r_i = \begin{cases}
  1, & i = \beta_\ell \text{ for some } \ell \\
  \exp(v_{\beta_{\ell}} + (\beta_{\ell}-i) \tau_i - v_i), 
  & \beta_{\ell - 1} < i < \beta_{\ell}  \text{ for some } \ell \text{ and } v_i \in \R \\
  \exp(v_{\beta_{\ell}} + (\beta_{\ell}-i) \tau_i), 
  & \beta_{\ell - 1} < i < \beta_{\ell}  \text{ for some } \ell \text{ and } v_i = -\infty
\end{cases}\,.$$
Geometrically, if $c_i \neq 0$ then $\log r_i$ is the distance of $v_i = \log|c_i|$ to the upper convex hull of the lifted Newton polytope. Figure \ref{fig:newton-subdiv} illustrates these concepts. Note that $\tau_i$, $\beta_{\ell - 1} < i \le \beta_{\ell}$, is the negative slope of the line connecting $(\beta_{\ell-1}, v_{\beta_{\ell-1}})$ and $(\beta_{\ell}, v_{\beta_{\ell}})$. We note that the complexity of computing the tropical roots of $f$ is \emph{linear} in the degree $d$, see e.g.\ \cite[Proposition 2.2.1]{sharify2011scaling}. 

\begin{definition}[Tropical backward error]
  The \emph{tropical backward error} of $\hat{X}$, denoted $\TBE(\hat{X})$, is the smallest number $\varepsilon \ge 0$ such that for all $i=0,\ldots,d-1$
\[ \begin{cases}
    |c_i - \hat{c}_i| \le r_i \varepsilon |c_i|, & c_i \neq 0, \\
     |c_i - \hat{c}_i| \le r_i \varepsilon, & c_i = 0,
   \end{cases}
\]
  where the $\hat{c}_i$ are the coefficients of 
  $$\hat{f} = c_d\prod_{j=1}^d (x - \hat{x}_j) = c_d x^d + \sum_{i=0}^{d-1} \hat{c}_{i}x^{i} \,.$$
\end{definition}

In the following we show that an element-wise mixed backward error of order machine precision also implies 
a tropical backward error of order machine precision and that the converse holds, under suitable assumptions, as well.
As in Section \ref{sec:d2} for the quadratic case, we assume in the following that $|x_j|$, $|\hat{x}_j|$ and $|\tilde{x}_j|$ have the same order of magnitude for $j=1,\ldots,d$.

\subsection{Element-wise Mixed Backward Stability implies Tropical Backward Stability}
We start by showing that an $\EMBE(\hat{X}) = \varepsilon$ implies $\TBE(\hat{X}) = O(\varepsilon)$.
The coefficients of the polynomial $f$ can be considered as functions of the roots $x_1,\ldots,x_d$.
Let
$$\sigma_k(x_1, \ldots, x_d) = \sum_{|I| = k} \prod_{i \in I} x_i$$
be the $k$-th elementary symmetric polynomial. We have the identity
\begin{equation*}
  f = c_d\prod_{i=1}^d (x - x_i) = c_d \sum_{k=0}^d (-1)^{d-k} \sigma_{d-k}(x_1,\ldots,x_d)x^k.
\end{equation*}
\begin{lemma} \label{lem:symmfunc}
If $\EMBE(\hat{X}) = \varepsilon$, then the coefficients of 
$$ \hat{f} = c_d x^d + \sum_{i =0}^{d-1} \hat{c}_i x^i = c_d (x-\hat{x}_1) \cdots (x- \hat{x}_d)$$
satisfy 
\begin{align*}
\left | \frac{\hat{c}_i - c_i}{c_i} \right | &\leq \varepsilon \left ( 1 + (d- i) \frac{\sigma_{d-i}(|\tilde{x}_1|, \ldots, |\tilde{x}_d|)}{|\sigma_{d-i}(x_1,\ldots, x_d)|} + O(\varepsilon) \right ), & \text{when } c_i \neq 0 \\ \intertext{ and }
|\hat{c}_i - c_i| &\leq \varepsilon( 1 + c_d (d-i) \sigma_{d-i}(|\tilde{x}_1|, \ldots, |\tilde{x}_d|) + O(\varepsilon)), & \text{when }c_i = 0.  
\end{align*}
\end{lemma}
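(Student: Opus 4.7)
The plan is to expand the difference $\hat c_i - c_i$ through the intermediate polynomial $\tilde f$ whose existence is guaranteed by $\EMBE(\hat X) = \varepsilon$, and then carry out a first-order perturbation expansion of the elementary symmetric polynomials. Concretely, write
\[
\hat c_i - c_i = (\hat c_i - \tilde c_i) + (\tilde c_i - c_i).
\]
The second summand is controlled directly by the defining inequalities of the element-wise mixed backward error: $|\tilde c_i - c_i| \leq \varepsilon |c_i|$ when $c_i \neq 0$, and $|\tilde c_i - c_i| \leq \varepsilon$ when $c_i = 0$. The whole work is therefore in bounding $|\hat c_i - \tilde c_i|$.

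For the first summand I would use the identity
\[
\hat c_i - \tilde c_i = (-1)^{d-i} c_d \bigl( \sigma_{d-i}(\hat x_1, \ldots, \hat x_d) - \sigma_{d-i}(\tilde x_1, \ldots, \tilde x_d) \bigr),
\]
and then substitute $\hat x_j = \tilde x_j (1 + \delta_j)$, where $|\delta_j| \leq \varepsilon$ by definition of EMBE. Expanding a single product:
\[
\prod_{j \in I}(1+\delta_j) = 1 + \sum_{j \in I}\delta_j + O(\varepsilon^2),
\]
for every index set $I$ of size $d-i$. Summing over all such $I$ yields
\[
\sigma_{d-i}(\hat x) - \sigma_{d-i}(\tilde x) = \sum_{|I| = d-i} \Bigl(\prod_{j \in I}\tilde x_j\Bigr)\Bigl(\sum_{j \in I}\delta_j\Bigr) + O(\varepsilon^2),
\]
and the triangle inequality, together with $|\delta_j|\leq \varepsilon$ and $|I|= d-i$, gives
\[
|\sigma_{d-i}(\hat x) - \sigma_{d-i}(\tilde x)| \leq (d-i)\,\varepsilon\, \sigma_{d-i}(|\tilde x_1|, \ldots, |\tilde x_d|) + O(\varepsilon^2).
\]

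Combining the two pieces with $|c_i| = |c_d|\,|\sigma_{d-i}(x_1, \ldots, x_d)|$ and dividing by $|c_i|$ in the case $c_i \neq 0$ yields the first inequality; in the case $c_i = 0$, not dividing gives the second, after absorbing the factor $|c_d|$ into the stated constant. The main obstacle, and really the only nontrivial point, is the combinatorial bookkeeping in the expansion of $\sigma_{d-i}$: one must see that each of the $\binom{d}{d-i}$ monomials in $\sigma_{d-i}(|\tilde x|)$ appears exactly $d-i$ times with coefficients $\delta_j$, which is what produces the factor $(d-i)$ in front of $\sigma_{d-i}(|\tilde x_1|,\ldots,|\tilde x_d|)$. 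Everything else is bookkeeping: the $O(\varepsilon^2)$ remainder from the product expansion, the triangle inequality, and the assumption that $|\tilde x_j|$ and $|x_j|$ have the same order of magnitude (which lets one replace $\sigma_{d-i}(|\tilde x|)$ by $\sigma_{d-i}(|x|)$ up to a constant factor when later comparing with $|\sigma_{d-i}(x)|$ in applications of the lemma).
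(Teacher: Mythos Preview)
Your proof is correct and follows essentially the same route as the paper: split $\hat c_i - c_i$ through $\tilde c_i$, control the second piece directly from the EMBE definition, and for the first piece substitute $\hat x_j = \tilde x_j(1+\delta_j)$ into $\sigma_{d-i}$ and expand to first order. Your expansion step is in fact written out more carefully than the paper's (which summarizes the first-order term somewhat schematically), but the argument and the resulting bound are identical.
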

\begin{proof}
Suppose $c_i \neq 0$. We have 
$$ \frac{\hat{c}_i - c_i}{c_i} = \frac{\hat{c}_i - \tilde{c}_i}{c_i} + \frac{\tilde{c}_i - c_i}{c_i}.$$
This implies 
\begin{align*}
\left | \frac{\hat{c}_i - c_i}{c_i} \right | &\leq \left |\frac{\hat{c}_i - \tilde{c}_i}{c_i} \right | + \left | \frac{\tilde{c}_i - c_i}{c_i} \right | \\
& \leq \frac{|\sigma_{d-i}(\hat{x}_1, \ldots, \hat{x}_d) - \sigma_{d-i}(\tilde{x}_1, \ldots, \tilde{x}_d)| }{|\sigma_{d-i}(x_1, \ldots, x_d)|} + \varepsilon \\
& = \frac{|\sigma_{d-i}(\tilde{x}_1(1 + \hat{\delta}_1), \ldots, \tilde{x}_d(1+\hat{\delta}_d)) - \sigma_{d-i}(\tilde{x}_1, \ldots, \tilde{x}_d)| }{|\sigma_{d-i}(x_1, \ldots, x_d)|} + \varepsilon
\end{align*}
where $|\hat{\delta}_i| \leq \varepsilon, i = 1, \ldots, d$. Note that the second inequality and the equality both use $\EMBE(\hat{X}) = \varepsilon$. We now observe 
$$\sigma_{d-i}(\tilde{x}_1(1 + \hat{\delta}_1), \ldots, \tilde{x}_d(1+\hat{\delta}_d)) = \sigma_{d-i}(\tilde{x}_1, \ldots, \tilde{x}_d) + \sum_{j=1}^{d-i} \hat{\delta}_j \sigma_{d-i}(\tilde{x}_1, \ldots, \tilde{x}_d) + \text{h.o.t.},$$
where the `higher order terms' contain at least two of the $\hat{\delta}_i$. This, together with the triangle inequality, shows 
$$\left | \frac{\hat{c}_i - c_i}{c_i} \right | \leq \varepsilon (d-i) \frac{\sigma_{d-i}(|\tilde{x}_1|, \ldots, |\tilde{x}_d|)}{|\sigma_{d-i}(x_1, \ldots, x_d)|} + O(\varepsilon^2) + \varepsilon.$$
The case $c_i = 0$ is completely analogous.
\end{proof}

It is well known that the values $\exp(\tau_i)$ are related to the modulus of the classical roots $x_i$, see for instance \cite{sharify2011scaling}. In what follows, we will make the assumption that the order of magnitude of $\exp(\tau_i)$ is equal to that of $|x_i|$ (and, by our previous assumption, also to $|\hat{x}_i|$ and $|\tilde{x}_i|$). Under this assumption, we have for $\beta_{\ell-1}< i \le \beta_{\ell}$ that 
$$ \sigma_{d-i}(|x_1|, \ldots, |x_d|) = \sum_{|I| = d-i} \prod_{j \in I} |x_j| = D_i \Binom{m_{\ell}}{\beta_\ell - i} \prod_{k=0}^{d-i-1} \exp(\tau_{d-k}),$$
with $D_i$ a not too large constant and $$\Binom{m_{\ell}}{\beta_\ell - i} = \frac{m_\ell!}{(\beta_\ell - i)!(i- \beta_{\ell-1})!}$$
the binomial coefficient. This can be seen as follows. The important terms in the expansion of $\sigma_{d-i}(|x_1|, \ldots, |x_d|)$ are those containing $d-i$ large roots. By the ordering of the roots by their modulus, these terms have the order of magnitude of $\prod_{k=0}^{d-i-1} \exp(\tau_{d-k})$. We can assume that each of these terms contains the largest $m_{\ell+1} + \cdots + m_s \leq d-i$ roots. For the remaining factors, we can choose $\beta_\ell - i$ roots among $\{x_{\beta_{\ell-1} +1}, \ldots, x_{\beta_\ell} \}$. 

By our assumption that $|\tilde{x}_i| \approx |x_i|$, we have
$$ \sigma_{d-i}(|\tilde{x}_1|, \ldots, |\tilde{x}_d|) = \sum_{|I| = d-i} \prod_{j \in I} |\tilde{x}_j| = \tilde{D}_i \Binom{m_{\ell}}{\beta_\ell - i} \prod_{k=0}^{d-i-1} \exp(\tau_{d-k}),$$
with $\tilde{D}_i$ a not too large constant.
Taking valuations on both sides of this equality, we get 
\begin{equation} \label{eq:assumption}
\log |\sigma_{d-i}(|\tilde{x}_1|, \ldots, |\tilde{x}_d|)| = w_i + \sum_{k=0}^{d-i - 1} \tau_{d-k}
\end{equation}
where $w_i = \log \left |\tilde{D}_i \binom{m_i}{\beta_\ell - i} \right|$ and $\exp(w_i)$ is a not too large positive number. Equation \eqref{eq:assumption} is the assumption we will use in the next theorem. 

\begin{theorem} \label{thm:EMBEimpliesTBE}
If $\EMBE(\hat{X}) = \varepsilon$ and the order of magnitude of $|x_i|$ is equal to that of $|\hat{x_i}|$ and $\exp(\tau_i)$, $i = 1, \ldots, d$ and \eqref{eq:assumption} holds, then the coefficients of 
$$ \hat{f} = c_d x^d + \sum_{i =0}^{d-1} \hat{c}_i x^i = c_d (x-\hat{x}_1) \cdots (x- \hat{x}_d)$$
satisfy 
\begin{align*}
\left | \frac{\hat{c}_i - c_i}{c_i} \right | &\leq \varepsilon \left ( 1 + (d- i) \exp(w_i) r_i + O(\varepsilon) \right ) \quad \text{when } c_i \neq 0 \\ \intertext{ and }
|\hat{c}_i - c_i| & \leq \varepsilon \left ( 1 + (d- i) \exp(w_i) r_i + O(\varepsilon) \right ) \quad \text{when } c_i = 0.
\end{align*}
In particular, $ \TBE(\hat{X}) \leq \varepsilon \max_i (1 + (d- i) \exp(w_i) + O(\varepsilon))$.
\end{theorem}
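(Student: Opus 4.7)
The plan is to combine Lemma \ref{lem:symmfunc} with the tropical definition of $r_i$, reducing the entire theorem to a single telescoping identity on the upper hull of the lifted Newton polytope. Lemma \ref{lem:symmfunc} already supplies bounds of exactly the shape appearing in the conclusion; what remains is to show that the symmetric-function ratio in those bounds coincides with $\exp(w_i)\, r_i$. When $c_i \ne 0$, Vieta's formulas give $|\sigma_{d-i}(x_1,\ldots,x_d)| = |c_i|/|c_d|$, hence $\log |\sigma_{d-i}(x)| = v_i - v_d$, and the hypothesis \eqref{eq:assumption} gives $\log \sigma_{d-i}(|\tilde x|) = w_i + \sum_{k=0}^{d-i-1} \tau_{d-k}$. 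Unwinding these in logarithmic form, the claim becomes the identity
\begin{equation*}
 \sum_{k=0}^{d-i-1} \tau_{d-k} \;=\; v_{\beta_\ell} - v_d + (\beta_\ell - i)\tau_i,
\end{equation*}
where $\beta_{\ell-1} < i \leq \beta_\ell$, which is precisely $\log r_i + v_i - v_d$ by the definition of $r_i$.

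I would prove this identity by splitting the sum $\sum_{j=i+1}^{d}\tau_j$ according to the blocks of the induced subdivision $\Delta$. Inside the block containing $i$, the tropical roots $\tau_j$ for $j = i+1,\ldots,\beta_\ell$ are all equal to $\tau_i$ and contribute $(\beta_\ell - i)\tau_i$. On each subsequent block $(\beta_{p-1}, \beta_p]$, the closed form $\tau_j = (v_{\beta_{p-1}} - v_{\beta_p})/m_p$ with $m_p = \beta_p - \beta_{p-1}$ summands collapses to $v_{\beta_{p-1}} - v_{\beta_p}$, and summing over $p = \ell+1,\ldots,s$ telescopes to $v_{\beta_\ell} - v_d$. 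Adding the two contributions recovers the identity. The case $c_i = 0$ is entirely analogous: the bound in Lemma \ref{lem:symmfunc} involves $|c_d|\,\sigma_{d-i}(|\tilde x|)$ rather than a ratio, and multiplying by $|c_d| = \exp(v_d)$ together with the observation that the formula for $r_i$ no longer contains a $v_i$ term gives $|c_d|\,\sigma_{d-i}(|\tilde x|) = \exp(w_i)\, r_i$ via the same telescoping.

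With both identities in hand, the coefficient bounds in the theorem follow directly from Lemma \ref{lem:symmfunc}. To extract the $\TBE(\hat X)$ bound, I would then exploit $r_i \geq 1$: the standalone constant $1$ in each bound can be absorbed as $1 \leq r_i$, yielding $|\hat c_i - c_i| \leq r_i\, \varepsilon\, (1 + (d-i)\exp(w_i) + O(\varepsilon))\,|c_i|$ when $c_i \neq 0$, and the same bound without $|c_i|$ when $c_i = 0$. Taking the maximum over $i$ gives the stated bound on $\TBE(\hat X)$. The main obstacle I anticipate is purely bookkeeping: keeping careful track of the block structure in the telescoping sum and uniformly handling the edge cases $i = \beta_\ell$ (where $r_i = 1$ and the inner-block contribution vanishes) and $c_i = 0$ (where $v_i$ drops out of the formula for $r_i$). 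Once this combinatorial identity is in place, the rest of the argument is a clean application of Lemma \ref{lem:symmfunc}.
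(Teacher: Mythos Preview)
Your proposal is correct and follows essentially the same approach as the paper: both arguments reduce the claim to Lemma~\ref{lem:symmfunc} and then identify the ratio $\sigma_{d-i}(|\tilde x|)/|\sigma_{d-i}(x)|$ with $\exp(w_i)\,r_i$ via Vieta's formulas, the hypothesis~\eqref{eq:assumption}, and the telescoping identity $\log r_i = v_d + \sum_{k=0}^{d-i-1}\tau_{d-k} - v_i$ on the upper hull. Your telescoping is organized block-by-block on $\sum_{j>i}\tau_j$ while the paper repeatedly rewrites $v_{\beta_\ell}$ upward, but this is the same computation; you also spell out the use of $r_i \ge 1$ for the final $\TBE$ bound, which the paper leaves implicit.
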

\begin{proof}
For $\beta_{\ell-1} \leq i \leq \beta_{\ell}$, assume $c_i \neq 0$. Note that 
\begin{align*}
\log|r_i| &= v_{\beta_{\ell}} + (\beta_{\ell} - i)\tau_{\beta_{\ell}} - v_{i}\\
&= v_{\beta_{\ell+1}} + m_{i+1} \tau_{\beta_{\ell+1}} + (\beta_{\ell} - i)\tau_{\beta_{\ell}} - v_{i} \\
&= v_{\beta_{\ell+2}} + m_{i+2} \tau_{\beta_{\ell+2}} + m_{i+1} \tau_{\beta_{\ell+1}} + (\beta_{\ell} - i)\tau_{\beta_{s}} - v_{i} \\
&= \ldots \\
&= v_{\beta_\ell} + \sum_{k = \ell +1}^\ell m_k \tau_{\beta_k} + (\beta_{\ell} - i)\tau_{\beta_{\ell}} - v_{i} \\
&= v_{d} + \sum_{k=0}^{d-i - 1} \tau_{d-k} - v_i.
\end{align*}
Now, using \eqref{eq:assumption} and $v_i = v_{d} + \log|\sigma_{d-i}(x_1, \ldots, x_d)|$ we get 
$$ \log|r_i| = \log|\sigma_{d-i}(|\tilde{x}_1|, \ldots, |\tilde{x}_d|)| -\log|\sigma_{d-i}(x_1, \ldots, x_d)| - w_i.$$
Therefore 
$$\exp(w_i) r_i = \exp(w_i + \log|r_i|) = \frac{\sigma_{d-i}(|\tilde{x}_1|, \ldots, |\tilde{x}_d|)}{|\sigma_{d-i}(x_1, \ldots, x_d)|}$$ and we are done by Lemma \ref{lem:symmfunc}. The proof for $c_i = 0$ is analogous. 
\end{proof}
In \cite{mastronardi2015revisiting}, Mastronardi and Van Dooren show that, for $d=2$, $\EMBE(\hat{X}) = O(u)$ implies $\NBE(\hat{X}) = O(u)$, where $\NBE(\hat{X})$ is the \emph{norm-wise backward error} as defined in the introduction. Theorem \ref{thm:EMBEimpliesTBE} allows us to prove this statement for general degrees. 
\begin{proposition} \label{prop:EMBEimpliesNBE}
If $\EMBE(\hat{X}) = \varepsilon$ and the assumptions of Theorem \ref{thm:EMBEimpliesTBE} are satisfied, we have that 
$$ \norm{(c_0,\ldots,c_{d-1},c_d) - (\hat{c}_0, \ldots, \hat{c}_{d-1}, c_d)}_2 = O(\varepsilon) \norm{(c_0,\ldots, c_{d-1},c_{d})}_2.$$
\end{proposition}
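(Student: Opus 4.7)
My plan is to deduce the result directly from Theorem \ref{thm:EMBEimpliesTBE} by bounding each coefficient difference pointwise in terms of $\norm{c}_2$. Since $c$ and $\hat c$ share the leading coefficient $c_d$, only the first $d$ entries of $c - \hat c$ are nonzero, so it suffices to prove an inequality of the form $|c_i - \hat c_i| \le \varepsilon M_i \, \norm{c}_2$ for $i=0,\ldots,d-1$ with moderate constants $M_i$, and then sum the squares.

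The one geometric ingredient I need is the envelope bound that, for each $i$, the product $r_i |c_i|$ (and, when $c_i = 0$, the number $r_i$ on its own) is at most $\norm{c}_\infty$. Indeed, for $\beta_{\ell-1} < i < \beta_\ell$ with $c_i \ne 0$, the definition of $r_i$ combined with $\tau_i = (v_{\beta_{\ell-1}} - v_{\beta_\ell})/m_\ell$ gives
\[ \log(r_i |c_i|) = v_{\beta_\ell} + (\beta_\ell - i)\,\tau_i = \lambda v_{\beta_{\ell-1}} + (1-\lambda)\,v_{\beta_\ell}, \qquad \lambda = \tfrac{\beta_\ell - i}{m_\ell} \in [0,1], \]
so $r_i|c_i| = |c_{\beta_{\ell-1}}|^{\lambda}\,|c_{\beta_\ell}|^{1-\lambda} \le \max(|c_{\beta_{\ell-1}}|,|c_{\beta_\ell}|) \le \norm{c}_\infty$. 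The identical computation bounds $r_i$ itself when $v_i = -\infty$, while at the vertex indices $i = \beta_\ell$ one has $r_i = 1$ and $r_i|c_i| = |c_i| \le \norm{c}_\infty$. Geometrically this just says that the upper convex hull dominates each coefficient, which is precisely its defining property.

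With this in hand, Theorem \ref{thm:EMBEimpliesTBE} yields $|\hat c_i - c_i| \le \varepsilon |c_i| + \varepsilon (d-i)\exp(w_i)\, r_i|c_i| + O(\varepsilon^2)|c_i|$ when $c_i \ne 0$, and the analogue with $|c_i|$ replaced by $1$ when $c_i = 0$. Replacing both $|c_i|$ and $r_i|c_i|$ by $\norm{c}_2$ via the envelope bound gives $|c_i - \hat c_i| \le \varepsilon M_i\,\norm{c}_2$ with $M_i = 1 + (d-i)\exp(w_i) + O(\varepsilon)$ a moderate constant depending only on $d$ and the factors $\tilde D_i$ appearing in \eqref{eq:assumption}. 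Summing squares and taking square roots gives
\[ \norm{c - \hat c}_2 \;\le\; \varepsilon \, \Bigl(\sum_{i=0}^{d-1} M_i^2\Bigr)^{1/2} \, \norm{c}_2 \;=\; O(\varepsilon)\,\norm{c}_2, \]
which is the claimed bound. I do not expect any real obstacle: once the envelope inequality $r_i|c_i| \le \norm{c}_\infty$ is in hand, the rest is the triangle inequality together with the elementary $\ell^\infty \le \ell^2$ estimate.
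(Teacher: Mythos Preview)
Your proposal is correct and follows essentially the same route as the paper: both arguments apply Theorem~\ref{thm:EMBEimpliesTBE} coefficient-wise to get $|c_i-\hat c_i|\le \varepsilon|c_i|+O(\varepsilon)\,r_i|c_i|$ (with the obvious modification when $c_i=0$), and then pass from the vector $(r_ic_i)_i$ back to $(c_i)_i$ using that $r_i|c_i|\le\norm{c}_\infty$. The paper's proof takes a slightly roundabout detour through $\tilde c_i$ before arriving at the same coefficient bound, and it justifies the final norm comparison only by the remark that $r_i=1$ at $i=\mathrm{argmax}_\ell|c_\ell|$; your explicit convexity computation showing $\log(r_i|c_i|)=\lambda v_{\beta_{\ell-1}}+(1-\lambda)v_{\beta_\ell}$ is in fact a cleaner justification of the same envelope inequality the paper is using.
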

\begin{proof}
Suppose that $c_i \neq 0$.
We have 
\begin{align*}
|c_i - \hat{c}_i| &\leq |c_i - \tilde{c}_i| + |\hat{c}_i - \tilde{c}_i|.
\end{align*}
Since $\EMBE(\hat{X}) = \varepsilon$, we have that 
$\left | (\hat{c}_i - c_i) - (\hat{c}_i - \tilde{c}_i) \right | \leq \varepsilon |c_i|.$
Combining this with Theorem \ref{thm:EMBEimpliesTBE}, we have that 
$$| \hat{c}_i - \tilde{c}_i| = O(\varepsilon) |c_i| r_i.$$
Hence, we obtain the bound
$$|c_i - \hat{c}_i| \leq \varepsilon |c_i| + O(\varepsilon) |c_i| r_i.$$
Analogously, when $c_i = 0$ we obtain 
$$|c_i - \hat{c}_i| \leq \varepsilon + O(\varepsilon) r_i.$$
It follows that 
\begin{align*}
\norm{(c_0 - \hat{c}_0, \ldots, c_{d-1} - \hat{c}_{d-1}, c_d - c_d)}_2 &\leq \varepsilon \norm{(c_0, \ldots, c_{d})}_2 + O(\varepsilon) \norm{(r_0c_0, \ldots, r_{d}c_{d})}_2\\
&\leq \varepsilon \norm{(c_0, \ldots, c_{d})}_2 + O(\varepsilon) \sqrt{d}\norm{(c_0, \ldots, c_{d})}_2,
\end{align*}
where the last inequality follows from 
\begin{align*}
\max_{i=1,\ldots,d} |c_i| &\leq \norm{(c_0, \ldots, c_{d})}_2 \leq \sqrt{d} \max_{i=1,\ldots,d} |c_i|, \\
\max_{i=1,\ldots,d} |c_i| &\leq \norm{(r_0c_0, \ldots, r_{d}c_{d})}_2 \leq \sqrt{d} \max_{i=1,\ldots,d} |c_i| 
\end{align*}
because $r_i = 1$ for $i = \text{argmax}_{\ell=1, \ldots, d} |c_\ell|$.
\end{proof}
We note that the proof of Proposition \ref{prop:EMBEimpliesNBE} can be summarized as 
$$ \EMBE(\hat{X}) = \varepsilon \overset{\text{Theorem \ref{thm:EMBEimpliesTBE}}}{\Longrightarrow} \TBE(\hat{X}) = O(\varepsilon) \Longrightarrow \NBE(\hat{X}) = O(\varepsilon).$$

\subsection{Tropical Backward Stability implies Element-wise Backward Stability?}\label{subsection3:2}
We now show that a tropical backward error of order $\varepsilon$ also implies a mixed element-wise backward error of the same magnitude under some assumptions.
For this, consider the perturbed polynomials
\begin{align}
\hat{f} = f + \hat{\Delta}f &= \sum_{i=0}^{d-1} c_i(1+r_i \delta_i) x^i + c_d x^d \nonumber\\
\intertext{and}
\tilde{f} = f + \tilde{\Delta}f &= \sum_{i=0}^{d-1} c_i(1+\kappa_i \delta_i e^{\sqrt{-1}\theta_i}) x^i + c_d x^d \label{eq:tildewithparam}
\end{align}
where $\log|\delta_i| \approx \log |\varepsilon| = v_{\varepsilon}$ and $\kappa_i \in \R, \theta_i \in [0,2\pi)$ are parameters. We assume that $\kappa_i$ is not too large, i.e.\ $\log |\kappa_i| \approx 0$, such that $\tilde{\Delta} f$ is a `small' perturbation. 
Observe that for the roots $\hat{x}_j = x_j + \hat{\Delta} x_j$ of $\hat{f}$ we have 
\begin{equation} \label{eq:expansion}
  \begin{array}{rl}
    0 &= (f + \hat{\Delta} f)(x_j + \hat{\Delta} x_j) \\
    &= f(x_j + \hat{\Delta} x_j) + \hat{\Delta} f(x_j + \hat{\Delta} x_j) \\
    &= f(x_j) + f'(x_j) \hat{\Delta} x_j + \frac{f''(x_j)}{2} \hat{\Delta}x_j^2 + \cdots + \frac{f^{(d)}(x_j)}{d!} \hat{\Delta}x_j^d \\ 
    &\phantom{=}+ \hat{\Delta} f(x_j) + \hat{\Delta} f'(x_j) \hat{\Delta} x_j + \frac{\hat{\Delta} f''(x_j)}{2} \hat{\Delta}x_j^2 + \cdots + \frac{\hat{\Delta} f^{(d)}(x_j)}{d!} \hat{\Delta}x_j^d.
  \end{array}
  \end{equation}
From this we conclude that $\hat{\Delta}x_j$ is a root of the polynomial 
$$ \hat{E}(x) = \hat{\Delta}f(x_j) + (f'(x_j) + \hat{\Delta}f'(x_j)) \frac{x}{1!} + \cdots + (f^{(d)}(x_j) + \hat{\Delta}f^{(d)}(x_j)) \frac{x^d}{d!}.$$
Similarly, for the roots $\tilde{x}_1 = x_1+ \tilde{\Delta}x_1, \ldots, \tilde{x}_d= x_d+ \tilde{\Delta}x_d$ of $\tilde{f}$ we have that $\tilde{\Delta}x_j$ is a root of the polynomial 
$$ \tilde{E}(x) = \tilde{\Delta}f(x_j) + (f'(x_j) + \tilde{\Delta}f'(x_j)) \frac{x}{1!} + \cdots + (f^{(d)}(x_j) + \tilde{\Delta}f^{(d)}(x_j))\frac{x^d}{d!}.$$

To show that tropical backward stability implies element-wise mixed backward stability we need three assumptions.
Each tropical root $\tau_i$ should attain the maximum in \eqref{eq:troppol} \emph{exactly} twice, and the other terms should be significantly smaller. Also, the tropical root $\tau_i$ should be of the same order of magnitude as $\log|x_i|$.
\begin{lemma} \label{lem:derivatives}
If for the tropical root $\tau_j$ of $f$ we have 
\begin{enumerate}
  \item $ \{ \beta \in \{0,\ldots, d\} ~|~ v_{\beta} + \beta \tau_j = \max_{i} v_i + i  \tau_j \} = \{\beta_{\ell-1}, \beta_\ell \}$ with $\beta_{\ell-1} < \beta_\ell$,
  \item $\log|x_j| \approx \tau_j$,
\item $|c_i x_j^i| \ll |c_{\beta_\ell} x_j^{\beta_\ell}|, i \neq \beta_{\ell-1},\beta_\ell$,
\end{enumerate}
then for $k \geq 1$
$$\log|f^{(k)}(x_j)| \lesssim v_{\beta_\ell} + (\beta_\ell - k) \tau_j.$$
Here `$\lesssim$' can be replaced by `$\approx$' for $k = 1$.
\end{lemma}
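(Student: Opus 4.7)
The approach is to write $f^{(k)}(x_j) = \sum_{i=k}^{d} \tfrac{i!}{(i-k)!}\, c_i x_j^{i-k}$ and estimate the sum term-by-term using the tropical data at the root $\tau_j$. By assumption~2, $|x_j| \approx \exp(\tau_j)$, so for every $i \geq k$ we have $\log |c_i x_j^{i-k}| = v_i + (i-k)\tau_j + O(1)$, with the $O(1)$ absorbing both the factorial prefactor and the discrepancy between $|x_j|$ and $\exp(\tau_j)$.

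Next I would use the tropical structure to identify the dominant index. Assumption~1 pinpoints the set on which $i \mapsto v_i + i \tau_j$ attains its maximum $M := v_{\beta_\ell} + \beta_\ell \tau_j$ as exactly $\{\beta_{\ell-1},\beta_\ell\}$; assumption~3 sharpens this by making every other value strictly much smaller than $M$. Therefore $|c_i x_j^{i-k}| \lesssim \exp(M - k\tau_j) = \exp(v_{\beta_\ell} + (\beta_\ell - k)\tau_j)$ uniformly over the $i$'s in the sum, and a triangle inequality over the $O(d)$ summands yields the claimed upper bound for all $k \geq 1$.

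For the sharper $\approx$ when $k=1$, I would exploit $f(x_j) = 0$, rewritten as $c_{\beta_{\ell-1}} x_j^{\beta_{\ell-1}} + c_{\beta_\ell} x_j^{\beta_\ell} = -\sum_{i \notin \{\beta_{\ell-1},\beta_\ell\}} c_i x_j^i$. The right-hand side is negligible compared to $\exp(M)$ by assumption~3, so the two dominant terms of $f(x_j)$ cancel to leading order. Regrouping the same two dominant terms in $x_j f'(x_j)$,
\[
x_j f'(x_j) = \beta_{\ell-1}\bigl(c_{\beta_{\ell-1}} x_j^{\beta_{\ell-1}} + c_{\beta_\ell} x_j^{\beta_\ell}\bigr) + m_\ell\, c_{\beta_\ell} x_j^{\beta_\ell} + (\text{terms of magnitude} \ll \exp(M)),
\]
where $m_\ell = \beta_\ell - \beta_{\ell-1} \geq 1$. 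The first parenthesis is negligible by the cancellation identity above, so $x_j f'(x_j) \approx m_\ell\, c_{\beta_\ell} x_j^{\beta_\ell}$, yielding $\log |f'(x_j)| \approx v_{\beta_\ell} + (\beta_\ell - 1)\tau_j$.

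The main obstacle is the $k=1$ argument: one must show that the near-cancellation forced by $f(x_j)=0$ does not propagate to $f'(x_j)$. The saving grace is that differentiation reweights the two dominant terms by the \emph{distinct} factors $\beta_{\ell-1}$ and $\beta_\ell$, so after subtracting off the cancellable part one is left with a residual proportional to $m_\ell\, c_{\beta_\ell} x_j^{\beta_\ell-1}$ which is not subject to any analogous cancellation. For $k \geq 2$ no such lower bound is claimed in the statement, consistent with the fact that higher-order derivatives can in principle produce further (smaller) cancellations among the two dominant terms.
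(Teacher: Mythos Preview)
Your proposal is correct and matches the paper's proof in its essential mechanics. The paper works with $x_j^k f^{(k)}(x_j)=\sum_{i=k}^{d}\tfrac{i!}{(i-k)!}c_i x_j^{i}$ and splits into three cases according to whether $k\le\beta_{\ell-1}$, $\beta_{\ell-1}<k\le\beta_\ell$, or $k>\beta_\ell$; you instead obtain the upper bound in one stroke via the triangle inequality and the tropical maximum, which is a slight streamlining but the same idea. For $k=1$ your regrouping $x_j f'(x_j)=\beta_{\ell-1}\bigl(c_{\beta_{\ell-1}}x_j^{\beta_{\ell-1}}+c_{\beta_\ell}x_j^{\beta_\ell}\bigr)+m_\ell\,c_{\beta_\ell}x_j^{\beta_\ell}+\text{(small)}$ is exactly the paper's Case~1 manipulation specialized to $k=1$ (where $\tfrac{i!}{(i-1)!}=i$), and it automatically absorbs the paper's Case~2 since $\beta_{\ell-1}=0$ makes the first bracket vanish.
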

\begin{proof}
We have that $x^k f^{(k)}(x) = \sum_{i = k}^d c_i \frac{i !}{(i-k)!} x^i$. We distinguish three different cases.
\begin{enumerate}
\item ($\beta_{\ell - 1} - k \geq 0, \beta_{\ell} - k \geq 0$). In this case 
$$ |x_j^kf^{(k)}(x_j)| = K_1 \left |c_{\beta_{\ell - 1}} \frac{\beta_{\ell - 1} !}{(\beta_{\ell - 1}-k)!} x_j^{\beta_{\ell - 1}}+ c_{\beta_{\ell}} \frac{\beta_{\ell} !}{(\beta_{\ell}-k)!} x_j^{\beta_{\ell}} \right |,$$
with $\log |K_1| \approx 0$.
Since $f(x_j)= 0$ and by assumption $|c_i x_j^i| \ll |c_{\beta_{\ell}}x_j^{\beta_{\ell}}|, i \neq \beta_{\ell - 1},\beta_{\ell}$, we have that 
\begin{align*}
|c_{\beta_{\ell - 1}}x_j^{\beta_{\ell - 1}} + c_{\beta_{\ell}}x_j^{\beta_{\ell}}| &= \left | \sum_{i \neq \beta_{\ell - 1},\beta_{\ell}} c_i x_j^i \right| 
\leq \sum_{i \neq \beta_{\ell - 1},\beta_{\ell}} |c_i x_j^i| \ll  |c_{\beta_{\ell}}x_j^{\beta_{\ell}}|.
\end{align*}
Then 
\small
\begin{align*}
|x_j^kf^{(k)}(x_j)| &= K_1 \left | \frac{\beta_{\ell - 1} !}{(\beta_{\ell - 1}-k)!} (c_{\beta_{\ell - 1}}x^{\beta_{\ell - 1}} + c_{\beta_{\ell}}x^{\beta_{\ell}}) + \left (\frac{\beta_{\ell} !}{(\beta_{\ell}-k)!} - \frac{\beta_{\ell - 1} !}{(\beta_{\ell - 1}-k)!} \right ) c_{\beta_{\ell}}x_j^{\beta_{\ell}} \right | \\
& = K_2 |c_{\beta_{\ell}}x_j^{\beta_{\ell}}|
\end{align*}
\normalsize
with $\log|K_2| \approx 0$. The lemma now follows from taking valuations.
\item ($\beta_{\ell - 1} - k <0, \beta_{\ell} - k \geq 0$). The lemma follows from the observation that in this case 
 $$ |x_j^kf^{(k)}(x_j)| = K_1 \left | c_{\beta_{\ell}} \frac{\beta_{\ell} !}{(\beta_{\ell}-k)!} x_j^{\beta_{\ell}} \right | = K_2 |c_{\beta_{\ell}}x_j^{\beta_{\ell}}|,$$
 with $\log|K_1| \approx 0, \log|K_2| \approx 0$.
\item($\beta_{\ell - 1} - k < 0, \beta_{\ell} - k < 0$). In this case 
$$ |x_j^kf^{(k)}(x_j)| = \left |\sum_{i = k}^d c_i \frac{i !}{(i-k)!} x_j^i \right | \leq \sum_{i = k}^d \left | c_i \frac{i !}{(i-k)!} x_j^i \right| \ll |c_{\beta_{\ell}}x_j^{\beta_{\ell}}|.$$
\end{enumerate}
Note that if $k = 1$, the third case is not possible because $\beta_{\ell} > \beta_{\ell - 1} \geq 0$.
\end{proof}
\begin{lemma} \label{lem:Ecoeffs}
Under the assumptions of Lemma \ref{lem:derivatives}, we have that 
$$ \log|\hat{\Delta} f(x_j)| \lesssim v_{\beta_{\ell}} + v_{\varepsilon} + \beta_{\ell} \tau_j, \quad  \log|\tilde{\Delta} f(x_j)| \lesssim v_{\beta_{\ell}} + v_{\varepsilon} + \beta_{\ell} \tau_j,$$ 
$$ \log |\hat{\Delta}f'(x_j)| \lesssim v_{\beta_{\ell}}+ v_{\varepsilon} + (\beta_{\ell}-1) \tau_j, \quad \log|\tilde{\Delta}f'(x_j)| \lesssim v_{\beta_{\ell}}+ v_{\varepsilon} + (\beta_{\ell}-1) \tau_j,$$
$$ \log |f^{(k)}(x_j) + \hat{\Delta}f^{(k)}(x_j)| \lesssim v_{\beta_{\ell}} + (\beta_{\ell}-k)\tau_j, \quad \log|f^{(k)}(x_j) + \tilde{\Delta}f^{(k)}(x_j)| \lesssim v_{\beta_{\ell}} + (\beta_{\ell}-k)\tau_j.$$
In the last line, for $k = 1$ we can replace `$\lesssim$' by `$\approx$'.
\end{lemma}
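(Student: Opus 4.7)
The plan is to interpret each $r_i$ geometrically and then exploit the convexity of the upper hull of the lifted Newton polytope. By the definition of $r_i$, the quantity $u_i := v_i + \log r_i$ (or $u_i := \log r_i$ when $v_i = -\infty$) equals the value at $i$ of the piecewise linear upper convex hull. In particular, for $\beta_{k-1} \le i \le \beta_k$ one has $u_i = v_{\beta_k} + (\beta_k - i)\tau_{\beta_k}$, i.e.\ $u$ has slope $-\tau_{\beta_k}$ on that segment.

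The core inequality I would establish is
$$u_i + i \tau_j \;\le\; v_{\beta_\ell} + \beta_\ell \tau_j \qquad (i = 0, \ldots, d-1),$$
with equality on the segment $[\beta_{\ell-1}, \beta_\ell]$ picked out by the assumption $\log|x_j|\approx \tau_j = \tau_{\beta_\ell}$. This follows from piecewise linearity: on $[\beta_{k-1}, \beta_k]$ the map $i \mapsto u_i + i\tau_j$ has slope $\tau_j - \tau_{\beta_k}$, which is positive for $k < \ell$, zero for $k = \ell$, and negative for $k > \ell$, by the strict monotonicity of the $\tau_{\beta_k}$. Hence the maximum is attained precisely on $[\beta_{\ell-1}, \beta_\ell]$, where $u_i + i\tau_j$ collapses to $v_{\beta_\ell} + \beta_\ell \tau_j$.

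Given this, I would bound $\hat{\Delta}f(x_j) = \sum_{i=0}^{d-1} c_i r_i \delta_i x_j^i$ term by term: each term has log-modulus $\lesssim u_i + v_\varepsilon + i\tau_j \le v_{\beta_\ell} + v_\varepsilon + \beta_\ell \tau_j$, so the triangle inequality (absorbing a factor of $d$) delivers the first bound. The bound on $\tilde{\Delta}f(x_j)$ is easier: since $\log|\kappa_i|\approx 0$, each term has log-modulus $\approx v_i + v_\varepsilon + i\tau_j$, and $v_i + i\tau_j \le v_{\beta_\ell} + \beta_\ell \tau_j$ is just the tropical equation for the root $\tau_j$. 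For the first-derivative bounds, write $\hat{\Delta}f'(x_j) = \sum_{i\ge 1} i\,c_i r_i \delta_i x_j^{i-1}$; the prefactor $i$ is $O(1)$ on log scale, and dividing by $x_j$ shifts the exponent by $-\tau_j$, giving the $(\beta_\ell - 1)\tau_j$ exponent. The $\tilde{\Delta}f'(x_j)$ case is analogous.

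The last pair of bounds is then obtained by combining with Lemma \ref{lem:derivatives}: $|f^{(k)}(x_j)|$ is already controlled by $\exp(v_{\beta_\ell}+(\beta_\ell-k)\tau_j)$, while the perturbation terms $|\hat{\Delta}f^{(k)}(x_j)|$ and $|\tilde{\Delta}f^{(k)}(x_j)|$ are smaller by a factor $\exp(v_\varepsilon)\ll 1$, so the triangle inequality closes the argument and for $k=1$ the $\approx$ from Lemma \ref{lem:derivatives} propagates because the perturbation is a lower-order correction to the dominant $f'(x_j)$. The main delicate point will be the bookkeeping in the $\lesssim$/$\approx$ notation: constants like $d$ and the falling factorials $\tfrac{i!}{(i-k)!}$ must be absorbed cleanly, and the $c_i=0$ entries need to be treated uniformly via the third branch in the definition of $r_i$, which is exactly what ensures the identity $u_i = \log r_i$ persists in that case.
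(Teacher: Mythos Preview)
Your proof is correct and follows essentially the same route as the paper. The paper's argument is terser: it simply asserts that the sum $\sum_i c_i r_i \delta_i x_j^i$ is dominated by the terms with $\beta_{\ell-1}\le i\le \beta_\ell$ (absorbing the rest into a constant $K_1$) and then bounds those by $(\beta_\ell-\beta_{\ell-1})|c_{\beta_\ell}\delta_{\beta_\ell}x_j^{\beta_\ell}|$, whereas you make this step rigorous by proving the convexity inequality $u_i+i\tau_j\le v_{\beta_\ell}+\beta_\ell\tau_j$ via the piecewise-linear slope analysis --- but this is just an explicit justification of what the paper leaves to the reader, not a different strategy.
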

\begin{proof}
We have
\begin{align*}
|\hat{\Delta} f(x_j)| &= \left | \sum_{i=0}^{d-1} c_i \delta_i r_i x_j^i \right|= K_1 \left | \sum_{\beta_{\ell - 1} \leq i \leq \beta_{\ell}}^{d-1} c_i \delta_i r_i x_j^i \right| \leq K_1(\beta_{\ell} - \beta_{\ell - 1})|c_{\beta_{\ell}} \delta_{\beta_{\ell}} x_j^{\beta_{\ell}}|,
\end{align*}
with $\log|K_1(\beta_{\ell} - \beta_{\ell - 1})| \approx 0$, which proves the first statement. The second statement is proven by a completely analogous argument. The third statement follows from 
\begin{align*}
|x_j \hat{\Delta} f'(x_j)| &= \left | \sum_{i=1}^{d-1} c_i i \delta_i r_i x_j^i \right| \leq K_1(\beta_{\ell} - \beta_{\ell - 1})|c_{\beta_{\ell}} \delta_{\beta_{\ell}} x_j^{\beta_{\ell}}|,
\end{align*}
with $\log|K_1(\beta_{\ell} - \beta_{\ell - 1})| \approx 0$. The fourth statement is analogous.
The fifth statement follows from 
$$ \log|f^{(k)}(x_j) + \hat{\Delta}f^{(k)}(x_j)| \approx \log|f^{(k)}(x_j)|$$
and Lemma \ref{lem:derivatives}. The sixth statement follows again from an analogous argument.
\end{proof}
It follows from Lemma \ref{lem:Ecoeffs} that we can bound the lifted Newton polytopes of the polynomials $\hat{E}, \tilde{E}$ from above. An example is shown in Figure \ref{fig:Ecoeffs}.
\begin{figure}
\centering
%
%
\definecolor{mycolor1}{rgb}{0.00000,0.44700,0.74100}%
\definecolor{mycolor2}{rgb}{0.85000,0.32500,0.09800}%
\definecolor{mycolor3}{rgb}{0.92900,0.69400,0.12500}%
\begin{tikzpicture}

\begin{axis}[%
width=4in,
height=3in,
at={(0.758in,0.481in)},
scale only axis,
xmin=-1,
xmax=7,
xtick = \empty,
ymin=-1,
ymax=9,
ytick = \empty,
axis background/.style={fill=white}
]
\addplot [color=mycolor1, line width=1.5pt, mark size=2.5pt, mark=*, mark options={solid, mycolor1}, forget plot]
  table[row sep=crcr]{%
0	0\\
1	7\\
2	6\\
3	5\\
4	4\\
5	3\\
6	2\\
};
\addplot [color=mycolor2, thick, dashed, forget plot]
  table[row sep=crcr]{%
0	8\\
1	7\\
2	6\\
3	5\\
4	4\\
5	3\\
6	2\\
};
\addplot [color=mycolor3, draw=none, mark size=2.5pt, mark=*, mark options={solid, mycolor3}, forget plot]
  table[row sep=crcr]{%
0	8\\
};
\node[xshift = 1.5cm] at (axis cs:0,0) {$v_{\beta_\ell} + v_\epsilon + \beta_\ell \tau_j$};
\node[xshift = 1.3cm] at (axis cs:0,8) {$v_{\beta_\ell} + \beta_\ell \tau_j$};
\node[xshift = 1.7cm] at (axis cs:1,7) {$v_{\beta_\ell} + (\beta_\ell-1) \tau_j$};
\node[xshift = 1.7cm] at (axis cs:2,6) {$v_{\beta_\ell} + (\beta_\ell-2) \tau_j$};
\node[xshift = 0.7cm] at (axis cs:3,5) {$\ldots$};
\node[draw] at (axis cs:1,7) {};
\end{axis}
\end{tikzpicture}%
\caption{The blue line shows an upper bound for the lifted Newton polytopes of $\hat{E}(x)$ and $\tilde{E}(x)$. The actual lifted polytopes will meet the blue line (approximately) in the point $(1, v_{\beta_{\ell}} + (\beta_{\ell} - 1) \tau_j)$ (indicated with a small box). }
\label{fig:Ecoeffs}
\end{figure}
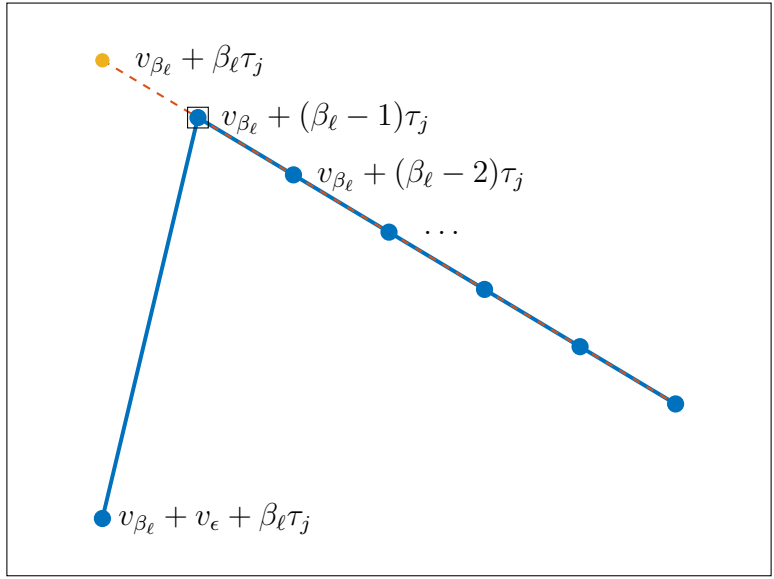
The expansion \eqref{eq:expansion} is used to approximate $\hat{\Delta}x_j$ as
\begin{equation} \label{eq:approxDelta}
\hat{\Delta}x_j \approx \frac{- \hat{\Delta} f (x_j)}{f'(x_j) + \hat{\Delta}f'(x_j)}.
\end{equation} 
It is clear that this is an approximation for the smallest root of $\hat{E}(z)$, which corresponds to the smallest tropical root $\tau_{\hat{E}}$ of $\hat{E}$ which is bounded by (see Figure \ref{fig:Ecoeffs}) 
\begin{equation} \label{eq:tauE}
\tau_{\hat{E}} \leq (v_{\beta_{\ell}} + v_\varepsilon + \beta_{\ell} \tau_j) - (v_{\beta_{\ell}} + (\beta_{\ell} - 1) \tau_j ) = \tau_j + v_{\varepsilon}.
\end{equation}
Analogously, we have for the smallest tropical root $\tau_{\tilde{E}}$ of $\tilde{E}$ that $\tau_{\tilde{E}} \leq \tau_j + v_{\varepsilon}$. We will also make our usual assumption that the tropical roots give an indication for the order of magnitude of the classical roots, i.e.
\begin{equation} \label{eq:stepestim}
\log|\hat{\Delta} x_j | \lesssim \tau_j + v_{\varepsilon}, \quad \log|\tilde{\Delta} x_j | \lesssim \tau_j + v_{\varepsilon}.
\end{equation}
We conclude that the assumptions of Lemma \ref{lem:derivatives} implies that $\hat{X}$ and $\tilde{X}$ have a relative \emph{forward} error of size $O(\varepsilon)$. This implies that $\EMBE(\hat{X}) = O(\varepsilon)$ (take $\tilde{x}_j = x_j$), which gives the following result.
\begin{theorem} \label{thm:TBEimpliesEMBE}
Under the assumptions of Lemma \ref{lem:derivatives}, if $\TBE(\hat{X}) = \varepsilon$ and the order of magnitude of $|x_j|$ is equal to that of $|\hat{x_j}|$ and $\exp(\tau_j)$, then $\EMBE(\hat{X}) = O(\varepsilon)$.
\end{theorem}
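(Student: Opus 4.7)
The plan is to show that the forward error $|\hat{x}_j - x_j|$ is itself of size $O(\varepsilon)|x_j|$. Once this is done, the proof collapses to a trivial choice: set $\tilde{x}_j := x_j$ for all $j$, so that $\tilde{f} = f$ and $\tilde{c}_i = c_i$, making the coefficient inequalities in Definition \ref{def:EMBE} hold with slack $0$, while the root inequalities reduce precisely to the forward error bound.

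First I would translate $\TBE(\hat{X}) = \varepsilon$ into the perturbation form $\hat{f} = f + \hat{\Delta}f$ with $\hat{\Delta}f = \sum_{i=0}^{d-1} c_i r_i \delta_i x^i$, $|\delta_i| \leq \varepsilon$, as in the start of Section~\ref{subsection3:2}. The Taylor expansion \eqref{eq:expansion} at the exact root $x_j$ then states that $\hat{\Delta}x_j = \hat{x}_j - x_j$ is a root of the polynomial $\hat{E}$ defined just after \eqref{eq:expansion}. To bound the smallest such root I would apply the valuation estimates of Lemma \ref{lem:Ecoeffs}: they produce the upper bound on the lifted Newton polytope of $\hat{E}$ shown in Figure \ref{fig:Ecoeffs}, with constant term of valuation at most $v_{\beta_\ell} + v_\varepsilon + \beta_\ell \tau_j$ and linear term of valuation approximately $v_{\beta_\ell} + (\beta_\ell - 1)\tau_j$.

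The slope computation \eqref{eq:tauE} then bounds the smallest tropical root $\tau_{\hat{E}}$ by $\tau_j + v_\varepsilon$. Invoking the standing assumption \eqref{eq:stepestim} that tropical roots reflect the orders of magnitude of classical roots yields $\log|\hat{\Delta}x_j| \lesssim \tau_j + v_\varepsilon$, and combining with $\log|x_j| \approx \tau_j$ gives $|\hat{x}_j - x_j| = O(\varepsilon)|x_j|$. With $\tilde{x}_j := x_j$ the two sets of inequalities in Definition \ref{def:EMBE} are then satisfied with backward error $O(\varepsilon)$, yielding $\EMBE(\hat{X}) = O(\varepsilon)$.

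The main obstacle is the step from the tropical bound $\tau_{\hat{E}} \leq \tau_j + v_\varepsilon$ to the genuine forward-error bound: this is not formal and is exactly where assumption~(1) of Lemma \ref{lem:derivatives} does real work. Because the maximum in \eqref{eq:troppol} at $\tau_j$ is attained at \emph{exactly} the two exponents $\beta_{\ell - 1}, \beta_{\ell}$ and all other terms are strictly dominated, the constant and linear coefficients of $\hat{E}$ are genuinely balanced at its smallest tropical root, so that $\hat{E}$ does not suffer the pathological cancellation that would let its true smallest root exceed $e^{\tau_{\hat{E}}}$ in magnitude. This faithfulness of the tropical picture is precisely what makes \eqref{eq:stepestim} applicable to $\hat{E}$, and with it the rest of the argument reduces to substitution.
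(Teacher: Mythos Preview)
Your proposal is correct and matches the paper's own argument essentially line for line: the paper likewise uses Lemma~\ref{lem:Ecoeffs} and the bound \eqref{eq:tauE} together with the standing assumption \eqref{eq:stepestim} to conclude that $\hat{X}$ has relative forward error $O(\varepsilon)$, and then takes $\tilde{x}_j = x_j$ to obtain $\EMBE(\hat{X}) = O(\varepsilon)$. Your final paragraph correctly identifies the one non-formal step, which the paper also treats as an assumption rather than a proved fact.
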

In fact, the assumptions of Lemma \ref{lem:derivatives} imply that the roots $X$ of $f$ are well-conditioned. Consider the first order approximation
$$ \frac{|\Delta x_j|}{|x_j|} \approx \frac{\max_{i=0,...,d} |c_i x_j^i|}{|f'(x_j)||x_j|} \frac{|\Delta f(x_j)|}{\max_{i=0,...,d} |c_i x_j^i|}, $$
for a perturbation $\Delta f$ on $f$ causing a perturbation $\Delta x_j$ on $x_j$. Here $\max_{i=0,...,d} |c_i x_j^i|$ is used to measure the residual $|\Delta f(x_j)|$ with a relative criterion. The condition of a root $x_j$ can be measured by 
$$ \frac{\max_{i=0,...,d} |c_i x_j^i|}{|f'(x_j)||x_j|} = \frac{c_{\beta_{\ell}} \exp(\tau_j)^{\beta_\ell}}{|f'(x_j)||x_j|}.$$
Using Lemma \ref{lem:derivatives}, we find that this number is of order 1.

In what follows, we will give a constructive proof for Theorem \ref{thm:TBEimpliesEMBE}. That is, in the proof of Theorem \ref{thm:explicit} (which implies Theorem \ref{thm:TBEimpliesEMBE}) we will give values for $\kappa_i, \theta_i$ in \eqref{eq:tildewithparam} which realize a small EMBE. It uses the following lemma. 
\begin{lemma} \label{lem:epsquad}
Under the assumptions of Lemma \ref{lem:derivatives}, we have that 
$$ \log \left|\hat{\Delta} x_j - \frac{- \hat{\Delta} f (x_j)}{f'(x_j) } \right| \lesssim \tau_j + 2v_\varepsilon, \quad \log \left|\tilde{\Delta} x_j - \frac{- \tilde{\Delta} f (x_j)}{f'(x_j)} \right| \lesssim \tau_j + 2v_\varepsilon. $$
\end{lemma}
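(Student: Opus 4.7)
The plan is to derive an exact identity for $\hat{\Delta} x_j$ from the Taylor expansion \eqref{eq:expansion}, subtract the first-order Newton-type approximation $-\hat{\Delta}f(x_j)/f'(x_j)$, and show that the remainder splits into two contributions, each of valuation at most $\tau_j + 2v_\varepsilon$. Starting from \eqref{eq:expansion}, isolating the linear term in $\hat{\Delta}x_j$ on one side gives the exact identity
$$\bigl(f'(x_j) + \hat{\Delta}f'(x_j)\bigr)\hat{\Delta}x_j = -\hat{\Delta}f(x_j) - R, \qquad R := \sum_{k=2}^{d}\frac{f^{(k)}(x_j)+\hat{\Delta}f^{(k)}(x_j)}{k!}\hat{\Delta}x_j^{k}.$$
Dividing through by $f'(x_j)+\hat{\Delta}f'(x_j)$ and then subtracting $-\hat{\Delta}f(x_j)/f'(x_j)$ produces the algebraic rearrangement
$$\hat{\Delta}x_j - \frac{-\hat{\Delta}f(x_j)}{f'(x_j)} \;=\; \frac{\hat{\Delta}f(x_j)\,\hat{\Delta}f'(x_j)}{f'(x_j)\bigl(f'(x_j)+\hat{\Delta}f'(x_j)\bigr)} \;-\; \frac{R}{f'(x_j)+\hat{\Delta}f'(x_j)}.$$
This cleanly separates the ``wrong denominator'' error (first term) from the genuine higher-order Taylor remainder (second term).

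The first term can be bounded by applying Lemma \ref{lem:derivatives} for $\log|f'(x_j)| \approx v_{\beta_\ell} + (\beta_\ell - 1)\tau_j$ and Lemma \ref{lem:Ecoeffs} for $\log|\hat{\Delta}f(x_j)| \lesssim v_{\beta_\ell} + v_\varepsilon + \beta_\ell \tau_j$ and $\log|\hat{\Delta}f'(x_j)| \lesssim v_{\beta_\ell} + v_\varepsilon + (\beta_\ell - 1)\tau_j$. Adding the two numerator valuations and subtracting twice the denominator valuation yields exactly $\tau_j + 2v_\varepsilon$, as desired. For the second term, each summand in $R$ is bounded, via Lemma \ref{lem:Ecoeffs} and the assumption \eqref{eq:stepestim}, by
$$\log\Bigl|\tfrac{f^{(k)}(x_j)+\hat{\Delta}f^{(k)}(x_j)}{k!}\hat{\Delta}x_j^{k}\Bigr| \;\lesssim\; \bigl(v_{\beta_\ell}+(\beta_\ell-k)\tau_j\bigr) + k(\tau_j+v_\varepsilon) \;=\; v_{\beta_\ell}+\beta_\ell \tau_j + k\,v_\varepsilon,$$
and since $v_\varepsilon \ll 0$ the dominant (largest-valuation) summand occurs at $k=2$ with value $v_{\beta_\ell}+\beta_\ell \tau_j + 2v_\varepsilon$. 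Dividing once more by the denominator (whose valuation is $v_{\beta_\ell}+(\beta_\ell-1)\tau_j$ by Lemma \ref{lem:derivatives}) again produces the bound $\tau_j + 2v_\varepsilon$. Combining the two contributions gives the first inequality; the bound for $\tilde{\Delta}x_j$ follows by the identical argument with $\hat{f}$ replaced by $\tilde{f}$, noting that Lemma \ref{lem:Ecoeffs} supplies the analogous bounds for $\tilde{\Delta}f^{(k)}$.

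The main technical obstacle is simply the bookkeeping step of unifying the two sources of second-order error into one expression with matching denominators; once the rearrangement above is in place, the valuation bounds follow by directly plugging in Lemmas \ref{lem:derivatives} and \ref{lem:Ecoeffs} together with \eqref{eq:stepestim}. One small point to double-check is that the assumption of Lemma \ref{lem:derivatives} rules out the edge case where the inequality in Lemma \ref{lem:derivatives} for $k=1$ fails to be an equivalence, which is precisely why we needed the ``$\approx$'' sharpening at $k=1$ to guarantee that $|f'(x_j) + \hat{\Delta}f'(x_j)|$ does not drop below its leading-order estimate.
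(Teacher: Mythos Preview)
Your proof is correct and follows essentially the same approach as the paper: both start from the Taylor identity \eqref{eq:expansion}, isolate the first-order term, and bound the remaining contributions term by term using Lemmas \ref{lem:derivatives}, \ref{lem:Ecoeffs} and \eqref{eq:stepestim}. The only difference is a minor algebraic choice: the paper moves the $\hat{\Delta}f'(x_j)\hat{\Delta}x_j$ term to the right-hand side and divides by $f'(x_j)$ directly, obtaining $-\hat{\Delta}x_j\,\hat{\Delta}f'(x_j)/f'(x_j)$ as the first correction term, whereas you divide by $f'(x_j)+\hat{\Delta}f'(x_j)$ first and then correct the denominator, producing $\hat{\Delta}f(x_j)\hat{\Delta}f'(x_j)/[f'(x_j)(f'(x_j)+\hat{\Delta}f'(x_j))]$. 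Both rearrangements yield the same valuation bound $\tau_j+2v_\varepsilon$ for each contribution, and the higher-order terms are handled identically.
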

\begin{proof}
It follows from $\hat{E}(\hat{\Delta} x_j) = 0$ that 
$$ f'(x_j) \hat{\Delta} x_j = - \left( \hat{\Delta} f(x_j) + \hat{\Delta} x_j  \hat{\Delta}f'(x_j)  + \sum_{k=2}^d \hat{\Delta}x_j^k \frac{f^{(k)}(x_j) + \hat{\Delta}f^{(k)}(x_j)}{d!} \right ).$$
The valuation of the first neglected term in the approximation \eqref{eq:approxDelta} is
$$\log \left |\frac{ \textcolor{mycolor1}{\hat{\Delta}x_j} \textcolor{mycolor2}{ \hat{\Delta}f'(x_j)}}{ \textcolor{mycolor4}{f'(x_j)}} \right | \lesssim \textcolor{mycolor1}{\tau_j + v_\varepsilon} + \textcolor{mycolor2}{v_{\beta_{\ell}} + v_\varepsilon + (\beta_{\ell} - 1)\tau_j} - \textcolor{mycolor4}{(v_{\beta_{\ell}} + (\beta_{\ell} - 1) \tau_j)} = 2v_\varepsilon + \tau_j,$$
where we used Lemma \ref{lem:derivatives}, Lemma \ref{lem:Ecoeffs} and \eqref{eq:stepestim}. For the term corresponding to $k = 2$, we have
$$\log \left |\frac{ \textcolor{mycolor1}{\hat{\Delta}x_j^2} \textcolor{mycolor2}{ f^{(2)}(x_j)}}{2 \textcolor{mycolor4}{f'(x_j)}} \right | \lesssim \textcolor{mycolor1}{2 (\tau_j + v_\varepsilon)} + \textcolor{mycolor2}{v_{\beta_{\ell}} + (\beta_{\ell} - 2)\tau_j} - \textcolor{mycolor4}{(v_{\beta_{\ell}} + (\beta_{\ell} - 1) \tau_j)} = 2v_\varepsilon + \tau_j.$$
For the terms corresponding to higher values of $k$, we get in the same way a valuation of $ \tau_j + k v_\varepsilon < \tau_j + 2 v_\varepsilon$. The reasoning for $\tilde{\Delta} x_j$ is completely analogous.
\end{proof}

\begin{theorem} \label{thm:explicit}
Under the assumptions of Lemma \ref{lem:derivatives}, there are choices of the parameters $\kappa_i$, $\theta_i$ with $\log|\kappa_i| \approx 0$ such that $\log|\hat{x}_j - \tilde{x}_j| \approx v_\varepsilon + \tau_j$.
\end{theorem}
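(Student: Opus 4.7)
The plan is to combine Lemma \ref{lem:epsquad} with an explicit, constructive choice of the parameters. Subtracting the two first-order expansions supplied by that lemma gives
$$
\hat x_j - \tilde x_j \;=\; \frac{\tilde\Delta f(x_j) - \hat\Delta f(x_j)}{f'(x_j)} + R,
$$
where $\log|R| \lesssim \tau_j + 2v_\varepsilon$, which is of strictly lower order than the target $\tau_j + v_\varepsilon$. By the $\approx$ part of Lemma \ref{lem:derivatives} at $k=1$, $\log|f'(x_j)| \approx v_{\beta_\ell} + (\beta_\ell-1)\tau_j$. So it suffices to choose $\mu_i := \kappa_i e^{\sqrt{-1}\theta_i}$ (with $\log|\kappa_i|\approx 0$, i.e.\ $|\mu_i|$ of order $1$) so that
$$
\log\bigg|\sum_{i=0}^{d-1} c_i\delta_i(\mu_i - r_i)\, x_j^i\bigg| \;\approx\; v_\varepsilon + v_{\beta_\ell} + \beta_\ell\tau_j.
$$

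For the upper bound ($\lesssim$), I would argue term-by-term using that $r_{\beta_k}=1$ at each vertex, that $|c_i r_i x_j^i|\approx |c_{\beta_\ell} x_j^{\beta_\ell}|$ for $\beta_{\ell-1}\le i\le\beta_\ell$ by the geometric construction of the $r_i$, and that assumption (3) of Lemma \ref{lem:derivatives} dismisses all other indices as strictly smaller order. The triangle inequality then yields the bound $\lesssim\varepsilon\,|c_{\beta_\ell} x_j^{\beta_\ell}|$ for \emph{any} bounded choice of the $\mu_i$.

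The matching lower bound is the main obstacle, since the at most $O(1)$-many leading-order contributions could in principle conspire to cancel. I would handle this by fixing $\mu_i = 1$ for every $i\ne\beta_\ell$, computing $S := \sum_{i\ne\beta_\ell} c_i\delta_i(1-r_i) x_j^i$ (which satisfies $|S|\lesssim\varepsilon\,|c_{\beta_\ell}x_j^{\beta_\ell}|$ by the upper bound above), and using $\mu_{\beta_\ell}$ as a single tuning parameter. If $|S|\gtrsim \varepsilon\,|c_{\beta_\ell}x_j^{\beta_\ell}|$ one takes $\mu_{\beta_\ell} = 1$ so that the total sum equals $S$, of the required magnitude. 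Otherwise, set $\mu_{\beta_\ell} = 1 + \eta$ with $\eta$ chosen so that $|c_{\beta_\ell}\delta_{\beta_\ell}\eta\, x_j^{\beta_\ell}| = 2\varepsilon\,|c_{\beta_\ell}x_j^{\beta_\ell}|$; the reverse triangle inequality then forces the total sum to have modulus $\ge \varepsilon\,|c_{\beta_\ell}x_j^{\beta_\ell}|$, independent of the direction of $S$. Since $|\delta_{\beta_\ell}|\approx \varepsilon$, the required $|\eta|$ is of order $1$, so $|\mu_{\beta_\ell}|$ is of order $1$ and $\log|\kappa_{\beta_\ell}|\approx 0$ as required. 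Dividing the resulting numerator by $|f'(x_j)|$ and absorbing $R$ yields $\log|\hat x_j-\tilde x_j|\approx v_\varepsilon+\tau_j$, completing the proof.
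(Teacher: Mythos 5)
Your argument is correct and rests on the same skeleton as the paper's proof: reduce via Lemma \ref{lem:epsquad} to estimating $\tfrac{1}{f'(x_j)}\sum_{i} c_i\delta_i(\kappa_ie^{\sqrt{-1}\theta_i}-r_i)x_j^i$, evaluate $\log|f'(x_j)|\approx v_{\beta_\ell}+(\beta_\ell-1)\tau_j$ from the $k=1$ case of Lemma \ref{lem:derivatives}, and use that $\log\bigl(r_i|c_i||x_j|^i\bigr)\lesssim v_{\beta_\ell}+\beta_\ell\tau_j$, with equality in order of magnitude exactly for $\beta_{\ell-1}\le i\le\beta_\ell$. The difference lies in the constructive choice and in what is actually proved. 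The paper sets $\kappa_ie^{\sqrt{-1}\theta_i}=r_i$ whenever $r_i=O(1)$ (annihilating those terms) and $\kappa_i=0$ when $r_i\gg 1$, so the surviving terms are the $-c_i\delta_ir_ix_j^i$ with $\beta_{\ell-1}<i<\beta_\ell$; it then concludes ``$\approx$'' from the observation that these all share the valuation $v_\varepsilon+v_{\beta_\ell}+\beta_\ell\tau_j$, without excluding cancellation among them. Your reservation of $\mu_{\beta_\ell}$ as a tuning parameter, combined with the reverse triangle inequality, genuinely closes that gap and delivers the two-sided bound the statement literally asserts. Two caveats: first, the control of the off-cell terms ($i\notin[\beta_{\ell-1},\beta_\ell]$) comes from the concavity of the upper hull, i.e.\ $v_i+\log r_i\le v_{\beta_\ell}+(\beta_\ell-i)\tau_j$ for all $i$, rather than from assumption (3) of Lemma \ref{lem:derivatives}, which concerns $|c_ix_j^i|$ without the factor $r_i$; second, only the upper bound $\log|\hat{x}_j-\tilde{x}_j|\lesssim v_\varepsilon+\tau_j$ is used to deduce Theorem \ref{thm:TBEimpliesEMBE}, and since your tuning of $\mu_{\beta_\ell}$ depends on $j$ through $S$, enforcing the lower bound simultaneously for several roots sharing the same cell would require a further argument. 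Neither point affects the validity of the proof for a fixed $j$ or its use downstream.
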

\begin{proof}
By Lemma \ref{lem:epsquad} we have $ \hat{x}_j = x_j - \frac{\hat{\Delta} f(x_j)}{f'(x_j)} + O(\varepsilon^2 \exp(\tau_j))$ and $ \tilde{x}_j = x_j - \frac{\tilde{\Delta} f(x_j)}{f'(x_j)} + O(\varepsilon^2 \exp(\tau_j))$. Hence it suffices to show that the valuation of
\begin{align*}
|\hat{x}_j - \tilde{x}_j| & \approx \left | \frac{1}{f'(x_j)} \right | \left | \sum_{i=0}^{d-1} c_i \delta_i (r_i - \kappa_i e^{\sqrt{-1} \theta_i}) x_j^i \right|
\end{align*} 
is bounded by $v_\varepsilon + \tau_j$. We have
\begin{align*}
\left | \frac{1}{f'(x_j)} \right | \left | \sum_{i=0}^{d-1} c_i \delta_i (r_i - \kappa_i e^{\sqrt{-1} \theta_i}) x_j^i \right| &\leq \left | \frac{1}{f'(x_j)} \right |  \sum_{i=0}^{d-1} |c_i| |\delta_i| |r_i - \kappa_i e^{\sqrt{-1} \theta_i}| |x_j^i |\\
& \leq \varepsilon \left | \frac{1}{f'(x_j)} \right |  \sum_{i=0}^{d-1} |c_i| |r_i - \kappa_i e^{\sqrt{-1} \theta_i}| |x_j^i |.
\end{align*}
We now specify the parameters $\kappa_i$, $\theta_i$. For $r_i = O(1)$, we choose $\kappa_i$, $\theta_i$ such that $r_i = \kappa_i e^{\sqrt{-1} \theta_i}$. Note that $\log|\kappa_i| \approx 0$. For the other $i$, we set $\kappa_i = \theta_i = 0$. We get 
$$|\hat{x}_j - \tilde{x}_j| \leq \varepsilon \left | \frac{1}{f'(x_j)} \right |  \sum_{r_i \gg 1} |c_i| |r_i| |x_j^i |.$$
Since by assumption $\log|x_j| \approx \tau_j$, the dominant terms in the sum are those with $\beta_{\ell - 1} < i < \beta_{\ell}$, where $\tau_{\beta_{\ell - 1}+1} = \cdots = \tau_{\beta_{\ell}} = \tau_j$. Therefore 
$$|\hat{x}_j - \tilde{x}_j| \leq K \varepsilon \left | \frac{1}{f'(x_j)} \right |  \sum_{\beta_{\ell - 1} < i < \beta_{\ell}} |c_i| |r_i| |x_j^i |$$
with $\log|K| \approx 0$. The valuation of one of the terms in the sum is 
$$\log|K| + v_\varepsilon - \log|f'(x_j)| + v_i + v_{\beta_{\ell}} + (\beta_{\ell} - i)\tau_j - v_i + i \tau_j$$
which is equal to $ \log|K| - \log|f'(x_j)| + v_\varepsilon  + v_{\beta_{\ell}} + \beta_{\ell} \tau_j$. Note that this is independent of $i$, and hence we get 
$$\log|\hat{x}_j - \tilde{x}_j| \approx  - \log|f'(x_j)| + v_\varepsilon  + v_{\beta_{\ell}} + \beta_{\ell} \tau_j.$$
Using Lemma \ref{lem:derivatives} we get 
$$\log \left |\frac{\hat{x}_j - \tilde{x}_j}{\tilde{x}_j} \right | \approx  v_\varepsilon  .$$
\end{proof}

\section{Computational Experiments} \label{sec:numexp}

In Subsection \ref{subsection3:2} we proved that a tropical backward error of order $\varepsilon$ also implies a mixed element-wise backward error of the same magnitude under some assumptions.
Unfortunately, we were not able to prove this result in general.
However, based on several numerical experiments that we performed, we are convinced that a small TBE implies a small EMBS also in general.
To support this conjecture, the following numerical experiment was performed.
\\
{\bf Numerical Experiment 1}
Take 1000 polynomials of degree $d$ with coefficients whose modulus is chosen as $10^e$ with $e$ uniformly randomly chosen 
between $-k$ and $k$ and whose argument is uniformly randomly chosen between $0$ and $2 \pi$.
These polynomials will not always satisfy the necessary assumptions for Theorem \ref{thm:TBEimpliesEMBE}.
The zeros of these polynomials are approximated by applying an eigenvalue method from the Julia package \texttt{Polynomials} resulting in the computed zeros $\hat{X}$. For these approximate roots the TBE is computed. To compute an upper bound
for the EMBE, the roots $\hat{x}_j$ are separately refined using Newton's method in extended precision based on the original polynomial.
The correspondence between the TBE and EMBE is shown in Figure~\ref{fig:exp01} and clearly indicates that a small TBE implies a small EMBE.
\begin{figure}[h]
\centering
\includegraphics[width=0.65\textwidth]{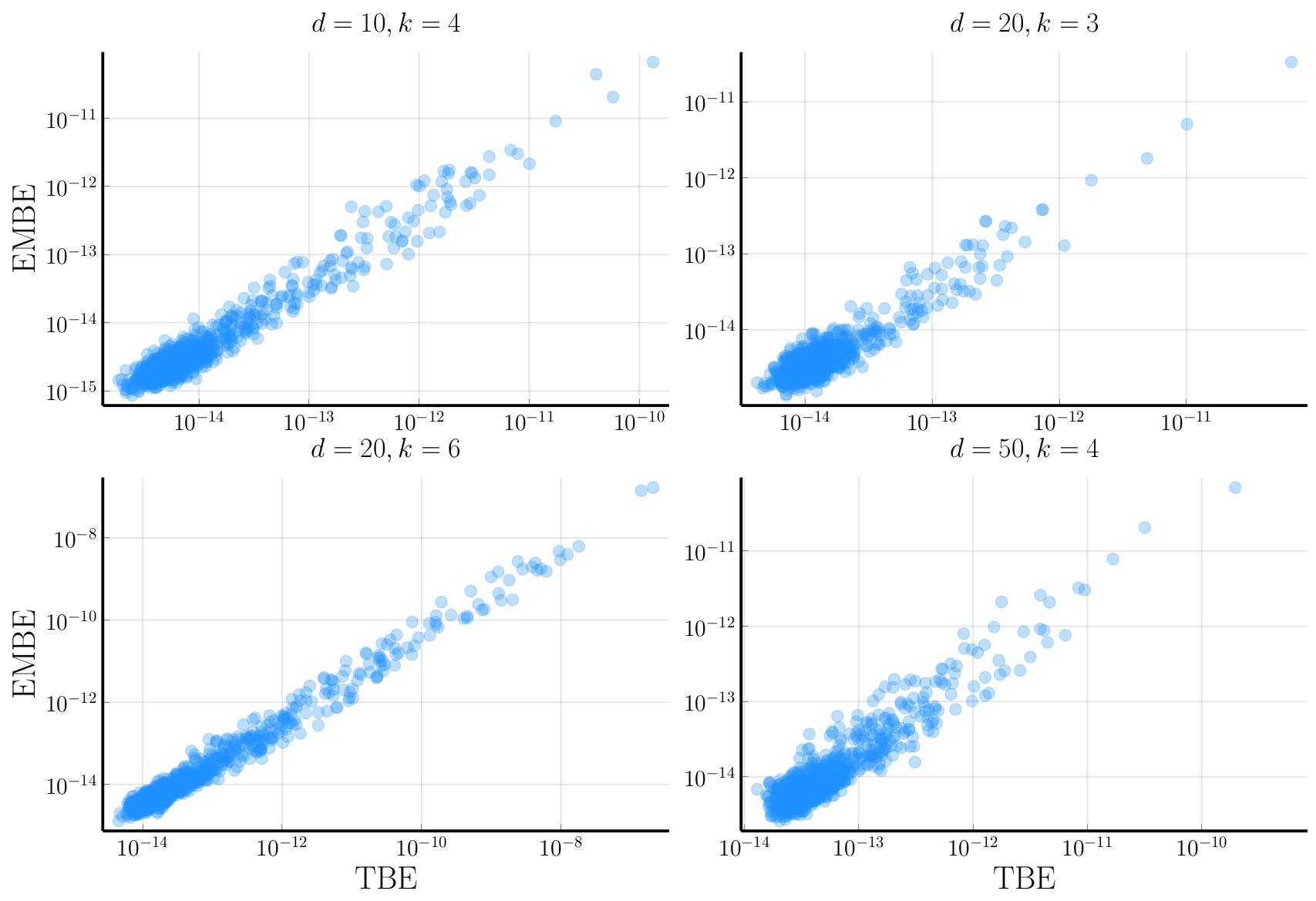}
\caption{Results of Numerical Experiment 1.}
\label{fig:exp01}
\end{figure}

In several of our statements, we assumed that the tropical roots of $f$ are of the same order of magnitude as the corresponding classical roots. To check if this is a reasonable assumption in practice, we performed the following numerical experiment.
\begin{figure}[h]
\centering
\includegraphics[width=0.65\textwidth]{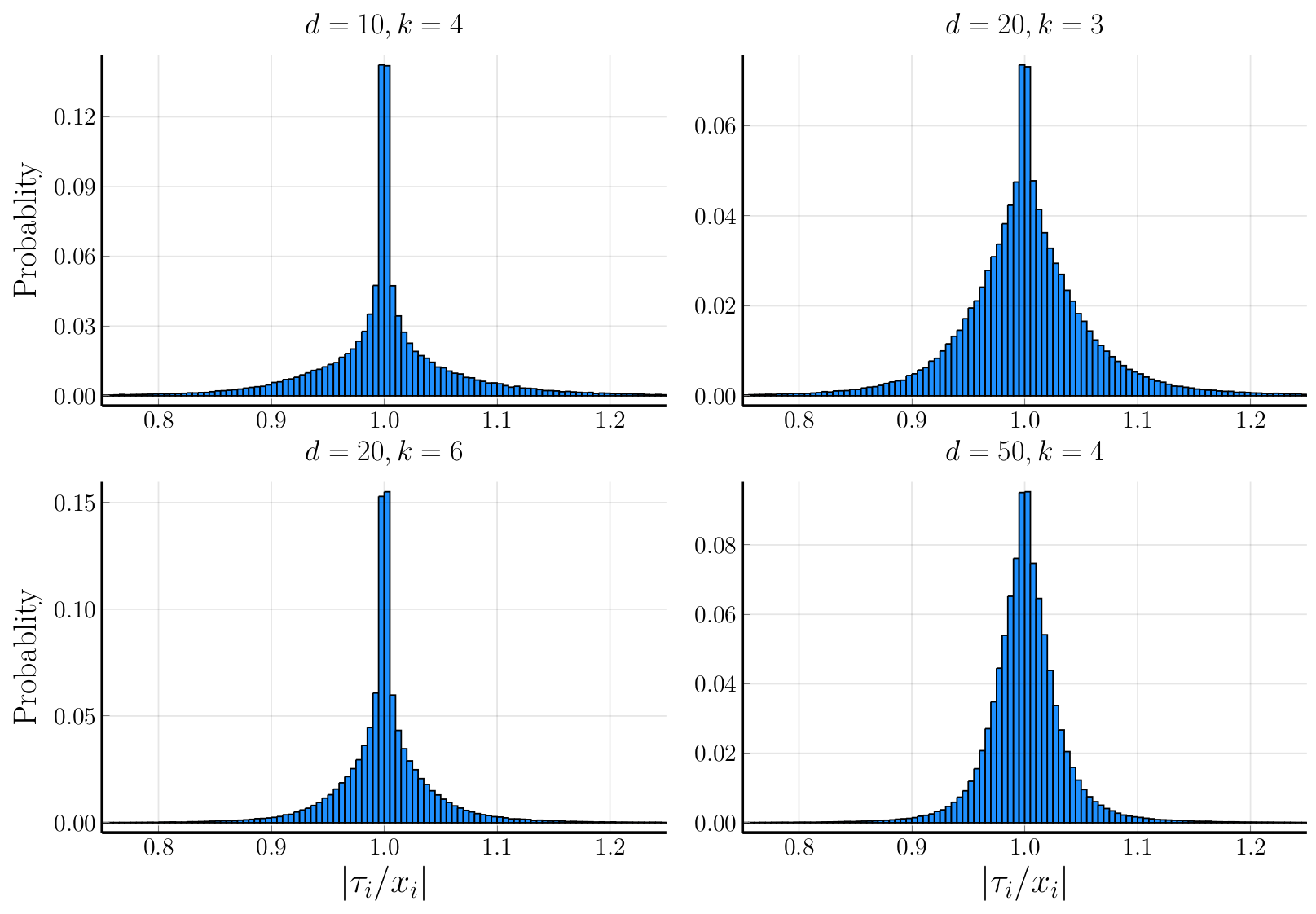}
\caption{Results of Numerical Experiment 2.}
\label{fig:exp02}
\end{figure}
\\
{\bf Numerical Experiment 2}
Ten thousand polynomials of degree $d$ are taken with coefficients whose modulus is chosen as $10^e$ with $e$ uniformly 
randomly chosen 
between $-k$ and $k$ and whose argument is uniformly randomly chosen between $0$ and $2 \pi$.
For each of these polynomials the tropical roots are compared to the roots computed in high precision.
Figure~\ref{fig:exp02} gives a histogram of the measured ratios $|\tau_i/x_i|$.
The results show that for the vast majority of roots the magnitude differs by at most 10 percent.
\section{Conclusion}
We have shown the relations \eqref{eq:diagram} between different measures for the backward error of an approximate set of roots $\hat{X}$ of a polynomial. Under some assumptions the tropical backward error measure of \cite{vanbarel2019tropical}, which is easy to compute, is shown to be equivalent to the element-wise mixed backward error measure defined in \cite{mastronardi2015revisiting} for $d=2$. We have given numerical evidence that the equivalence holds more generally. 

\footnotesize
\bibliography{references}
\bibliographystyle{alpha}

\end{document}